\newtheorem{definition}{Definition}[section]
\newtheorem{theorem}[definition]{Theorem}
\newtheorem*{theorem*}{Theorem}
\newtheorem{lemma}[definition]{Lemma}
\newtheorem{remark}[definition]{Remark}
\newtheorem{example}[definition]{Example}
\newtheorem*{example*}{Example}
\newtheorem{proposition}[definition]{Proposition}
\newtheorem{problem}[definition]{Problem}
\newtheorem{question}[definition]{Question}
\newcommand{\RR}{\mathbb{R}}
\newcommand{\pa}{\mathrm{pa}}
\newcommand{\an}{\mathrm{an}}
\newcommand{\An}{\mathrm{An}}
\newcommand{\lhs}{\mathrm{left}}
\newcommand{\tp}{\mathrm{top}}
\newcommand{\rhs}{\mathrm{right}}
\newcommand{\indep}{\perp \!\!\! \perp}
\DeclareMathOperator{\rank}{rank}
\newcommand{\ci}{\perp\!\!\!\perp}
\newcommand{\cd}{\,|\,}
\DeclareMathOperator{\trop}{trop}
\DeclareMathOperator{\image}{image}
\DeclareMathOperator{\trank}{trank}
\definecolor{darkgreen}{rgb}{0,0.4,0}
\definecolor{MyBlue}{rgb}{0,0.08,0.7} 
\definecolor{MyRed}{rgb}{0.85,0.08,0}
\renewcommand*\env@matrix[1][\arraystretch]{%
  \edef\arraystretch{#1}%
  \hskip -\arraycolsep
  \let\@ifnextchar\new@ifnextchar
  \array{*\c@MaxMatrixCols c}}
\title{\textbf{Markov Equivalence of \\ Max-Linear Bayesian Networks}}
\author{Carlos Am\'{e}ndola, Ben Hollering, Seth Sullivant, Ngoc Tran}
\date{}
\begin{document}

\maketitle

\begin{abstract}
Max-linear Bayesian networks have emerged as highly applicable models for causal inference via extreme value data. However, conditional independence (CI) for max-linear Bayesian networks behaves differently than for classical Gaussian Bayesian networks. We establish the parallel between the two theories via tropicalization, and establish the surprising result that the Markov equivalence classes for max-linear Bayesian networks coincide with the ones obtained by regular CI. Our paper opens up many problems at the intersection of extreme value statistics, causal inference and tropical geometry. 
\end{abstract}

\section{Introduction}\label{sec:intro}

A \emph{max-linear Bayesian network} is a special class of a graphical model on a directed acyclic graph (DAG) used to model causal relations between extreme values \citep{gissibl2018max,kluppelberg19,engelke2020graphical}. Denoting the maximum operator $\max$ by $\vee$, its defining equation is
\begin{equation}\label{eqn:max.linear.dag.bigvee}
    X_i =\bigvee_{j=1,\dots,n}c_{ij}X_j \vee Z_i, ~ \quad ~ c_{ij}, Z_i \geq 0 \\
\end{equation}
for each $i \in [n]=\{1,\dots,n \}$, where the $Z_i$ are independent and unobserved random variables and $C$ is a matrix of coefficients supported on a DAG with $n$ nodes.
Like their classical counterparts, they are versatile and easy to interpret. They are the simplest class of models that exhibit \emph{cascading failure}, where extreme measurements  $X_j$ (rainfall, contaminant level, risk, financial return)  occurring at a large number of nodes can be traced to a few common sources $Z_i$ (storm, chemical spill, catastrophic failure, financial shock). Such cascading failures are commonly experienced in hydrology, engineering, and finance, and therefore max-linear Bayesian networks are finding many applications in these domains \citep{gissibl2018graphical,gissibl2018max, buck2020recursive, janssen2020k, kluppelberg2021estimating}. 
Most recently, \cite{ngoc.claudia.johannes} fitted max-linear Bayesian trees to data and achieved state-of-the-art results on the Hidden River problem, the current benchmark for causal discovery from extreme data \citep{asadi2015extremes}. This indicates that max-linear Bayesian networks are highly suited to model causal relations between large observed values of random variables. 

Conditional independence (CI) theory is fundamental to causal inference on Bayesian networks \citep{spirtes2000causation, pearl1995theory}.  
While max-linear Bayesian networks are special instances of Bayesian networks, they have, however, a \emph{different} CI theory \citep{CImaxlinear}. While CI statements in Bayesian networks can be found by the classical \emph{d-separation} criterion \citep{geiger1990d, geigerIdentifying90, meek1995faithful}, CI statements on max-linear Bayesian networks are given by \emph{$\ast$-separation}, a stronger form of separation (see Example \ref{ex:cassioCI}). Furthermore, in sharp contrast with the classical case, CI statements on a max-linear Bayesian network can depend on both the coefficients $C$ and the context, that is, on some (partial) realization of the conditioning set \cite[Example 1.3]{CImaxlinear}.

The natural next step is the question of Markov equivalence. Two graphs $G$ and $H$ are called \emph{Markov equivalent} if they yield the same set of CI statements under a separation criterion. Based on CI statements alone, one can only hope to recover a DAG up to Markov equivalence. Nevertheless, while Markov equivalent graphs yield the same conditional independence structures, they have very different causal structures. Understanding the Markov equivalence classes allows for the development of algorithms that infer Markov equivalence classes from data such as the PC algorithm for Bayesian networks \citep{spirtes2000causation}. 

A necessary and sufficient condition for determining Markov equivalence for classical Bayesian networks is well-known \citep{verma1990equivalence, verma1992algorithm}. Specifically, two directed acyclic graphs are Markov equivalent if and only if they have the same skeleton and same unshielded colliders (also known as $v$-structures or immoralities) \citep{AMP97}. 
In this paper we answer the natural question of determining what are the Markov equivalence classes for max-linear Bayesian networks. 

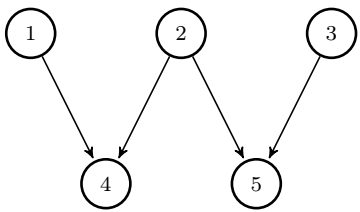
\begin{figure}
    \centering
    \begin{tikzpicture}[->,>=stealth',shorten >=1pt,auto,node distance=2cm,semithick]
        \tikzstyle{every node}=[circle,line width =1pt,font=\scriptsize,minimum height =0.65cm]
        
        \node (i1) [draw] {1};
        \node (i2) [right of = i1, draw] {2};
        \node (i3) [right of = i2, draw] {3};
        \node (i4) [below of = i1, xshift = 1cm, draw] {4};
        \node (i5) [below of = i2, xshift = 1cm, draw] {5};
        
        \path (i1) edge (i4);
        \path (i2) edge (i4);
        \path (i2) edge (i5);
        \path (i3) edge (i5);
        
    \end{tikzpicture}
    \caption{The Cassiopeia graph.}
    \label{fig:cassiopeia}
\end{figure}

Here is a motivating example. The relevant definitions will be given in Section~\ref{sec:prelim}, and more details in Example~\ref{ex:cassio2}.

\begin{example}[Cassiopeia]
\label{ex:cassioCI}
Consider the graph in Figure \ref{fig:cassiopeia}.   It holds that $1$ and $3$ are d-connected by $4$ and $5$. 
Therefore, the global Markov property is not enough to conclude that $1 \ci 3 \cd 4,5 $. 
Nevertheless, $1$ and $3$ are $*$-separated by $4$ and $5$ and hence we can conclude that for a max-linear Bayesian network supported on the Cassiopeia graph, 
the conditional independence statement 
$1 \ci 3 \cd 4,5 $ holds.
\end{example}

A priori, one would expect that the Markov equivalence class for max-linear Bayesian networks refine those seen for Gaussian Bayesian networks since it was shown in \citep{CImaxlinear} that max-linear Bayesian networks have additional valid conditional independence statements. We show in our main theorem that in fact it is not true that the Markov equivalence classes are refined, and that instead the equivalence classes of max-linear Bayesian networks coincide with the equivalence classes of Gaussian Bayesian networks.      

\begin{theorem*}[Theorem~\ref{thm:starSepEquiv}]\label{thm:first.main}
Gaussian Bayesian networks and max-linear Bayesian networks have the same Markov equivalence classes. 
\end{theorem*}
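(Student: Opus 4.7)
The plan is to prove both directions between the two notions of Markov equivalence. Let $\mathcal{I}_d(G)$ and $\mathcal{I}_*(G)$ denote the CI statements of a DAG $G$ under d-separation and $*$-separation, respectively. The Cassiopeia example shows $\mathcal{I}_d(G) \subsetneq \mathcal{I}_*(G)$ in general, so a priori the max-linear equivalence classes could strictly refine the Gaussian ones. It suffices to show $\mathcal{I}_*(G) = \mathcal{I}_*(H)$ if and only if $\mathcal{I}_d(G) = \mathcal{I}_d(H)$, which by the classical Verma--Pearl theorem is equivalent to $G$ and $H$ having the same skeleton and the same unshielded colliders.

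The easier direction, max-linear Markov equivalence implies Gaussian Markov equivalence, I would handle by contrapositive. If $(i,j)$ is an edge of $G$ but not of $H$, no set can $*$-separate $i$ and $j$ in $G$ (the direct edge yields an active $*$-connecting path under any conditioning set), while in $H$ some set of ancestors d-separates them, and hence $*$-separates them via the inclusion $\mathcal{I}_d(H) \subseteq \mathcal{I}_*(H)$; this produces a distinguishing $*$-CI. If the skeletons coincide but $G$ contains an unshielded collider $i \to k \leftarrow j$ absent from $H$, I would lift the usual Verma--Pearl witness: a set $S$ with $k \notin S$ that d-separates $i$ and $j$ in $G$ is a fortiori a $*$-CI in $\mathcal{I}_*(G)$; in $H$ the node $k$ is a non-collider on the path $i$--$k$--$j$, so this path remains active whenever $k \notin S$ under $*$-separation as well.

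For the surprising direction, Gaussian equivalence implies max-linear equivalence, my plan is to invoke Chickering's characterization: two DAGs are Gaussian Markov equivalent if and only if they are connected by a sequence of covered edge reversals. It then suffices to show that $\mathcal{I}_*$ is invariant under reversing a single covered edge $i \to j$ with $\pa(j) \setminus \{i\} = \pa(i)$. This reduces to a local combinatorial check: enumerate the $*$-connecting paths passing through the reversed edge and construct a bijection between $*$-separating sets in $G$ and in the modified graph $G'$, using the covered-edge hypothesis to make the local structure around $\{i,j\}$ symmetric enough for paths to be matched. An alternative route, suggested by the abstract's mention of tropicalization, is to realize $*$-separation as the tropical analogue of Gaussian CI and transport the classical Verma--Pearl argument directly to the max-linear side.

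The main obstacle is the covered-edge invariance step for $*$-separation. The extra statements in $\mathcal{I}_*(G) \setminus \mathcal{I}_d(G)$ arise from coincidences among max-weight tropical paths in $G$, and these are delicate to track under an edge reversal. I would split this into two lemmas: one classifying $*$-connecting paths that use the reversed edge versus those that do not, and a second verifying that the $*$-activity status is preserved by the bijection, leveraging $\pa(j) \setminus \{i\} = \pa(i)$ in an essential way. If this direction proves recalcitrant, I would fall back to the tropicalization viewpoint and express $*$-separation purely through tropical rank conditions on matrices determined by the skeleton and collider pattern.
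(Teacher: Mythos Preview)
Your treatment of the ``easier'' direction ($*$-equivalence $\Rightarrow$ $d$-equivalence) is essentially the standard argument and is correct; the paper takes this direction for granted and only writes out the nontrivial implication $G\sim_d H \Rightarrow G\sim_* H$.

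For that implication your route is genuinely different from the paper's. The paper does \emph{not} pass through Chickering's covered-edge reversals. Instead it works directly with the conditional reachability DAG $G_K^\ast$: a short ``surgery'' lemma shows that if $G\sim_d H$ and $i\to j$ is an edge of $G_K^\ast$, then in $H_K^\ast$ one of $i\to j$, $j\to i$, or a common parent $\ell\to i,\ \ell\to j$ must occur. The proof of the theorem then runs through the five canonical shapes of $*$-connecting paths (Figure~\ref{fig:starConnecting}), applies the lemma edge-by-edge, and in each configuration rebuilds a $*$-connecting path in $H_K^\ast$; shielded-collider creation is ruled out using that $G$ and $H$ share unshielded colliders. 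Your Chickering reduction is a legitimate alternative and, if the local invariance of $\mathcal{I}_*$ under a single covered-edge reversal can be established cleanly, would trade the paper's five-way case analysis for a single local check; conversely, the paper's approach avoids importing Chickering's theorem and handles arbitrary $d$-equivalent pairs in one pass.

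Two corrections to your plan. First, the remark that the extra statements in $\mathcal{I}_*(G)\setminus\mathcal{I}_d(G)$ ``arise from coincidences among max-weight tropical paths'' is a misconception: $*$-separation is a purely graphical criterion (d-connecting with at most one collider) and does not involve the edge weights $C$ at all. The weight-dependent CI statements of \cite{CImaxlinear} are a separate, finer phenomenon that the Markov-equivalence theorem deliberately sets aside (see the remark after the proof). So your ``main obstacle'' is a purely combinatorial check on paths, not a tropical one. Second, the proposed fallback of expressing $*$-separation via tropical rank conditions is unlikely to succeed: the paper itself shows (Example~\ref{ex:cassioTrank}) a $*$-separation in the Cassiopeia graph that does \emph{not} force a drop in tropical rank of the relevant submatrix of $\Sigma^{\mathrm{trop}}$, so tropical rank alone does not capture all of $\mathcal{I}_*$.
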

Aside from the tree case \citep{ngoc.claudia.johannes}, finding a consistent estimator for the parameters $c^\ast_{ij}$ of the max-linear Bayesian networks to data remains difficult. One primary reason is that $C^\ast$ is a matrix of max-weighted paths on an unknown DAG $G$. The set of such matrices is a non-convex piecewise-linear manifold made up of many low-dimensional cones \citep{tran2014polytropes}. Thus, estimators of $C^\ast$ are often sensitive to noise \citep{gissibl2021identifiability}. For Gaussian graphical models, conditional independence statements are governed by rank constraints on the covariance matrix (see e.g. \cite[Proposition 4.1.9]{sullivant18}). Our second main result establishes an analogue of this result for the max-linear model via tropical geometry. In particular, we show that some tropical rank constraints on the tropicalization of the covariance matrix correspond to conditional independence statements for max-linear Bayesian networks. 

\begin{theorem*}[Theorem~\ref{thm:tropRank}]\label{thm:second.main}
The conditional independence statements implied by d-separation in a max-linear Bayesian network impose tropical rank constraints that hold for every tropical covariance matrix supported on the network.
\end{theorem*}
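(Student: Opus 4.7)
The plan is to derive the tropical rank bound by transferring the classical Gaussian rank bound via tropicalization. In the Gaussian setting, if $A$ and $B$ are d-separated by $C$ in a DAG $G$, then $\rank(\Sigma_{A \cup C,\, B \cup C}) \leq |C|$ for every covariance $\Sigma$ in the Gaussian model on $G$ \cite[Prop.~4.1.9]{sullivant18}. I would lift the max-linear model to a one-parameter family of Gaussians, apply the classical bound, and tropicalize.

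First, I would fix the tropical covariance matrix $\Sigma^\trop$ attached to a max-linear model. Iterating the recursion \eqref{eqn:max.linear.dag.bigvee} gives $X_i = \bigvee_k c^\ast_{ik} Z_k$, where $c^\ast_{ik}$ is the max weight of a directed path from $k$ to $i$. The natural tropical analog of the Gaussian second moment is
\begin{equation*}
\Sigma^\trop_{ij} \;=\; \max_k \bigl(c^\ast_{ik} + c^\ast_{jk}\bigr),
\end{equation*}
which equals the tropical matrix product $C^\ast \otimes (C^\ast)^\trans$ and is the max-weight of a trek from $i$ to $j$; this directly parallels the classical trek rule $\Sigma_{ij} = \sum_\tau \mathrm{weight}(\tau)$.

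Next, I would construct a lift. Given edge weights $c_{ij}$ of the max-linear model, introduce a parameter $t>0$ and define a Gaussian Bayesian network on the same DAG $G$ with edge coefficients $\tilde{c}_{ij}(t) = t^{c_{ij}}$ and unit innovation variances. Its covariance entries are polynomials $\tilde{\Sigma}(t)_{ij} = \sum_\tau t^{\mathrm{weight}(\tau)}$ whose valuations (tropicalizations as $t \to \infty$) are exactly $\Sigma^\trop_{ij}$. Because d-separation $A \ci B \mid C$ in $G$ implies the Gaussian CI $X_A \ci X_B \mid X_C$, we get $\rank(\tilde{\Sigma}(t)_{A \cup C,\, B \cup C}) \leq |C|$ for every $t>0$, so all $(|C|+1)$-minors of this submatrix vanish identically in $t$. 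Tropicalizing each such polynomial identity shows that the maximum in the tropical permanent expansion of the corresponding tropical minor is attained at least twice. This is precisely the definition of a singular tropical minor, so the tropical rank (equivalently, the Kapranov rank) of $\Sigma^\trop_{A \cup C,\, B \cup C}$ is at most $|C|$, which is the required tropical rank constraint.

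The main obstacle will be choosing the right notion of tropical rank and making the tropicalization step rigorous. Among Barvinok, Kapranov, and tropical rank, the Kapranov bound falls out directly from the lift and automatically implies the tropical rank bound. Bounding the Barvinok rank would instead require producing an explicit tropical factorization of $\Sigma^\trop_{A \cup C,\, B \cup C}$ of inner dimension $|C|$; this is subtler because the tropical analog of the Schur complement identity $\Sigma_{A,B} = \Sigma_{A,C} \Sigma_{C,C}^{-1} \Sigma_{C,B}$ does not hold on the nose. A secondary issue is to verify that the Puiseux lift $\tilde{c}_{ij}(t)$ produces a well-defined Gaussian Bayesian network on $G$---positivity and acyclicity make this automatic---and to record that the d-separation-to-rank-bound implication holds for every $t>0$, not just generically, so that the resulting polynomial vanishing yields a bona fide tropical identity.
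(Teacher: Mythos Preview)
Your approach is essentially the same as the paper's: both transfer the Gaussian rank constraint from Proposition~\ref{prop:GaussCond} to the tropical setting via tropicalization of the trek-rule parametrization. The paper invokes the general containment $\image(\trop(\phi_G)) \subseteq \trop(\image(\phi_G))$ from \cite{ST08} and then observes that any polynomial vanishing on $\image(\phi_G)$ tropically vanishes on its tropicalization; your explicit Puiseux-type lift $\tilde{c}_{ij}(t)=t^{c_{ij}}$ is precisely the construction that witnesses this containment, so the two arguments differ only in how much of \cite{ST08} is unpacked.

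Two minor points worth tightening. First, since the $c_{ij}$ are arbitrary nonnegative reals, your $\tilde{\Sigma}(t)_{ij}$ are not polynomials in $t$ but finite generalized sums $\sum t^{\alpha}$; the step ``vanishes for all $t>0$ $\Rightarrow$ leading term cancels'' still goes through by dividing by the top power and letting $t\to\infty$, but you should say so rather than appeal to polynomial identity. Second, the cancellation of the leading term only directly shows that at least one even and one odd permutation attain the maximum (since each product $\prod_i \tilde{\Sigma}(t)_{i,\sigma(i)}$ has strictly positive leading coefficient); this is of course stronger than, and implies, the ``maximum attained twice'' condition defining tropical singularity. Your remark that the argument yields a Kapranov rank bound, hence a tropical rank bound, is correct and is implicit in the paper's phrasing as well.
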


This result opens up new directions for conditional independence testing in max-linear Bayesian networks. Namely, instead of finding a consistent estimator for $C^\ast$, which so far has proven difficult, one could opt to find a consistent estimator for the tropical covariance matrix and/or the tropical ranks of its sub-blocks. Furthermore, this theorem offers an \emph{algebraic} way to handle CI, complementing the path-based $\ast$-separation criterion of \cite{CImaxlinear}. In particular, this brings us closer in the task of identifying sufficient conditions for CI statements to hold in max-linear Bayesian networks. 

This paper is organized as follows. In Section~\ref{sec:prelim} we discuss the preliminary concepts and notation, including some basics on tropical algebra. The first main result, Theorem~\ref{thm:starSepEquiv}, and its proof are the content of Section~\ref{sec:mainresult}. We explore deeper the connection of max-linear models to tropical geometry by proving a tropical analogue of the classical trek rule in Section~\ref{sec:troptrek}. We use the tropical trek rule to obtain tropical rank constraints, culminating in our second main result, Theorem~\ref{thm:tropRank}, in Section~\ref{sec:troprank}. Finally, we consider in Section~\ref{sec:openprob} some interesting open problems and future research directions.

\section{Preliminaries}\label{sec:prelim}

\subsection{Tropical basics}

Here we recall some concepts of tropical geometry for our discussion. For an introductory exposition to this field we recommend \cite{maclagan2015introduction}.

We work in the \emph{max-times semiring} $(\mathbb{R}_\ge, \vee,\odot)$, defined by
$$a \vee b := \max(a,b), \, \, \, a \odot b := a b \, \, \, \mbox{ for } a,b \in \RR_\ge:=[0,\infty).$$

These tropical operations extend to $\RR_\ge^n$ coordinate-wise, to scalar-vector multiplication as
$$\lambda\odot x =(\lambda x_1,\ldots, \lambda x_d)\quad  \mbox{ for } \lambda\in \RR_\ge \mbox{ and } x\in \RR_\ge^n,$$ 
and to corresponding matrix product as 
\[(A\odot B)_{ij}= \bigvee_{\ell=1}^n a_{i\ell}b_{\ell j}\] 
for $A\in \RR_\ge^{m \times n}$ and $B\in \RR_\ge^{n\times p}$. In particular, this defines tropical matrix powers $A^{\odot k}$ for $k \in \mathbb{N}$ where $A^{\odot 0} = I_n$ is the identity matrix.

Analogously, we can define the \emph{tropical determinant} of a matrix $A \in \RR^{n\times n}$ as
$$\mathrm{tdet}(A) = \bigvee_{\sigma \in S_n} a_{1\sigma(1)} a_{2\sigma(2)} \dots a_{n\sigma(n)}$$
where $S_n$ denotes the symmetric group on $[n]$.

A matrix is \emph{tropically singular} if the maximum in the evaluation of the tropical determinant is attained at least twice.

\begin{definition}
The \emph{tropical rank} $\trank(M)$ of a matrix $M \in \RR^{m \times n}$ is the largest integer $r$ such that $M$ has a tropically non-singular $r \times r$ minor.
\end{definition}

\begin{example}
The matrix $M$ given by
$$\begin{pmatrix}
6 & 3 & 0 \\
0 & 8 & 4 \\
6 & 4 & 2 \\
\end{pmatrix} = \begin{pmatrix}
0 \\ 2 \\ 1 \\
\end{pmatrix} \odot  \begin{pmatrix}
0 & 4 & 2 \\
\end{pmatrix} \vee \begin{pmatrix}
3 \\ 0 \\ 3 \\
\end{pmatrix} \odot  \begin{pmatrix}
2 & 1 & 0 \\
\end{pmatrix} $$
has tropical rank 2. Indeed, the $2 \times 2$ minor 
$$A = \begin{pmatrix}
6 & 3  \\
0 & 8  \\
\end{pmatrix}$$
is tropically non-singular since 
$$\mathrm{tdet}(A) = 6 \odot 8 \, \vee \, 0 \odot 3 = 48 \vee 0 = 48$$
achieves its maximum uniquely. On the other hand, $M$ is tropically singular since
$$\mathrm{tdet}(M) = 96 \, \vee \, 84 \, \vee\,  0 \, \vee \, 0 \, \vee \,0 \, \vee \, 96 = 96 $$
attains its maximum twice, namely at the terms $6 \odot 8 \odot 2$ and $6 \odot 4 \odot 4$.
\end{example}

\begin{remark}
While tropical geometry is often introduced as min-plus or max-plus operations over $\RR \cup \{\infty \}$ or $\RR \cup \{-\infty\}$ respectively, our max-times semiring $(\mathbb{R}_\ge, \vee,\odot)$ is isomorphic to the latter by taking logarithms.
\end{remark}

\subsection{Max-Linear Bayesian Networks}

A \emph{max-linear Bayesian network} is given by a random vector $X = (X_1, \ldots X_n)$ with vertices on a directed acyclic graph $G = ([n], E)$, edge weights $c_{ij} \geq 0$, and independent positive random variables $Z_1, \ldots, Z_n$ called \emph{innovations}. The $Z_i$ have support $\RR_> = (0, \infty)$ and have atom-free distributions. Then $X$ is given by the \emph{recursive structural equations} 
\[
X_i = \bigvee_{j \in \pa(i)} c_{ij}X_j  \vee Z_i,
\]
or $X = C\odot X \vee Z$ in tropical notation. This system has solution $X= C^\ast \odot Z$, that is,
\begin{equation}\label{eqn:x.c.star}
X_i = \bigvee_{j \in \an(i) \cup {i}} c^\ast_{ij}Z_j 
\end{equation}
where $C^\ast = \bigvee_{k=0}^{n-1}C^{\odot k}$ is the \emph{Kleene star} of the matrix $C$. 
In these equations $\pa(i)$ denotes the parents
of node $i$ and $\an(i)$ denotes the ancestors of $i$.

Conditional independence in max-linear models can be quite different from conditional independence in classical Bayesian networks. For the latter, the \emph{d-separation} criterion gives a complete set of valid conditional independence statements for the model \citep{meek1995faithful}. 

We use the following standard notation (see e.g. \cite{kluppelberg19}). A \emph{path} in a DAG $G$ is a sequence of vertices $i_0, i_1, \ldots i_k$ such that $i_\ell \to i_{\ell+1}$ is an edge in $G$ or $i_{\ell+1} \to i_\ell$ is an edge in $G$ for each $\ell = 0, \ldots k$. A \emph{directed path} has edges $i_\ell \to i_{\ell+1}$ for all $\ell$. If there is a directed path from $i$ to $j$, we say that $i$ is an \emph{ancestor} of $j$ and denote by $\an(K)$ the set of all ancestors of nodes in $K$. A \emph{collider} on a path is a vertex $i_\ell$ in the path such that $i_{\ell-1} \rightarrow i_\ell \leftarrow i_{\ell+1}$. 

\begin{definition}
Two vertices $i$ and $j$ in $G$ are \emph{d-connected} given a set $K \subseteq [n] \setminus \{i, j\}$ if there is a path $\pi$ from $i$ to $j$ such that all colliders on $\pi$ are in $K \cup \an(K)$ and no non-collider on $\pi$ is in $K$. If $I, J, K \subseteq [n]$ are pairwise disjoint sets with $I$ and $J$ nonempty, then $K$ \emph{d-separates} $I$ and $J$ if no pair of nodes $i \in I$ and $j \in J$ are d-connected given $K$. 
We denote this by $I \perp_d J | K$.
\end{definition}

It was noted in \cite{kluppelberg19} that d-separation does not give all valid conditional independence statements for a max-linear Bayesian network. Recently, a new criterion named \emph{$\ast$-separation} which gives a complete set of conditional independence statements for max-linear models was given in \cite{CImaxlinear}.

\begin{definition}\label{def:starsep}
  A path $\pi$ between $i$ and $j$ in a DAG is \emph{$\ast$-connecting} given a set $K \subseteq [n] \setminus \{i,j\}$ if and only if 
  $\pi$ is d-connecting given $K$ and $\pi$ contains at most one collider.
  Two nodes $i$ and $j$ are \emph{$*$-connected} given $K$ if there is a $*$-connecting path.
  If $I, J, K \subseteq [n]$ are pairwise disjoint sets with $I$ and $J$ nonempty, then $K$ \emph{$*$-separates} $I$ and $J$ if no pair of nodes $i \in I$ and $j \in J$ are $*$-connected given $K$. 
  We denote this by $I \perp_* J | K$.
  \end{definition}

The $5$ basic shapes of $*$-connecting
paths are illustrated in Figure \ref{fig:starConnecting}.

\begin{figure}
    \centering
      \begin{tikzpicture}[->,>=stealth',shorten >=1pt,auto,node distance=2cm,semithick]
  \tikzstyle{every node}=[circle,line width =1pt,font=\scriptsize,minimum height =0.65cm]

  \node (i0) [draw] {$j$};
  \node (j0) [below of=i0,draw] {$i$} ;
  \path (i0) edge (j0) ;
  \node [below = 2.2cm of i0] {(a)};
  
\node (ksl) [right of =i0,draw]{$j'$};
\node (jsl) [below of = ksl,xshift =-0.7cm,draw] {$j$};
\node (isl) [below of = ksl,xshift =0.7cm,draw] {$i$};
\path (ksl) edge (jsl);
\path (ksl) edge (isl);

\node [below = 2.2cm of ksl] {(b)};

\node (i1) [right of =ksl, draw] {$j$};
  \node (j1) [below of=i1,xshift=0.5cm,draw,fill=gray!50] {$k$} ;
  \node (k1) [right of=i1,xshift=-1cm,draw] {$i$} ;
  \path (i1) edge (j1) ;
  \path (k1) edge (j1);  
  
  \node [below= 2.2cm  of i1,xshift =0.5cm] {(c)};
  
  \node (i2) [draw,right of = i1,xshift=1cm] {$j'$};
  \node (j2) [below of=i2,xshift=0.5cm,draw,fill=gray!50] {$k$} ;
  \node (k2) [right of=i2,xshift=-1cm,draw] {$i$} ;
  \node (i2p) [draw,left of = j2,xshift=1cm] {$j$};    
  \path (i2) edge (j2) ;
  \path (k2) edge (j2);   
  \path (i2) edge (i2p);
  
  \node [below= 2.2cm  of i2,xshift =0.5cm] {(d)};
  
  \node (i3) [draw,right of = k2] {$j'$};
  \node (j3) [below of=i3,xshift=1cm,draw,fill=gray!50] {$k$} ;
  \node (k3) [right of=i3,xshift =-0.5cm,draw] {$i'$} ;
  \node (i3p) [draw,left of = j3,xshift=0.5cm] {$j$};    
  \node (j3p) [draw,right of = j3,xshift=-0.5cm] {$i$};      
  \path (i3) edge (j3) ;
  \path (k3) edge (j3);   
  \path (i3) edge (i3p);  
  \path (k3) edge (j3p);  
  
  \node [below= 2.2cm  of i3,xshift =1cm] {(e)};
    
    \end{tikzpicture}
    \caption{Types of $\ast$-connecting paths between two nodes $i$ and $j$ with shaded nodes in $K$.}
    \label{fig:starConnecting}
\end{figure}
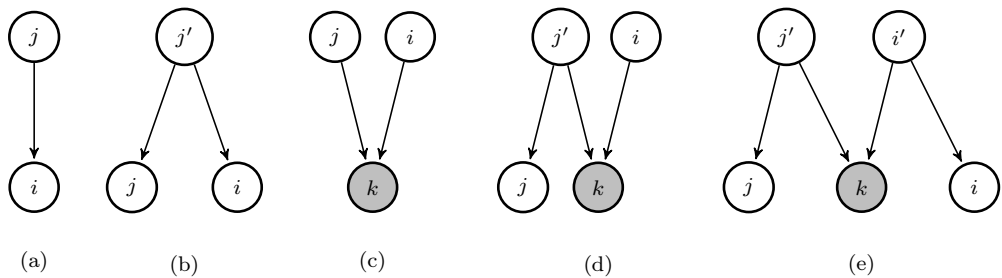

An alternate formulation of $*$-separation involves the notion of the conditional reachability DAG. This alternate formulation is useful for giving the proofs of our main result. 

\begin{definition}
  Let $G$ be a DAG and $K \subseteq [n]$. A directed path $\pi$ from $i$ to $j$ factors through $K$ if there exists a vertex $k \in \pi$ such that $k \neq i, j$ and $k \in K$. The \emph{conditional reachability DAG}, denoted $G_K^\ast$ is a graph on $[n]$ with edges given by $i \rightarrow j \in G_K^\ast$ if and only if there exists a directed path from $i \rightarrow j$ that does not factor through $K$.   
\end{definition}

\begin{example}
\label{ex:CondReachGraph}
Let $G$ be the DAG pictured on the left in Figure \ref{fig:CondReachGraph} and let $K = \{3\}$. Then the conditional reachability graph $G_K^\ast$ is the graph pictured on the right in Figure \ref{fig:CondReachGraph}. Note that the additional edge $1 \to 5$ is due to the fact that the path $1 \to 4 \to 5$ is a directed path that does not factor through $K$. On the other hand, there is no edge between 2 and 5 since the only directed path between 2 and 5 in $G$ is $2 \to 3 \to 5$ which factors through $K$.

Observe that since $1 \to 5$ is a $\ast$-connecting path in $G_K^\ast$ we have that $1 \not \perp_\ast 5 | 3$. The only path between $2$ and $5$ in $G_K^\ast$ is $2 \to 3 \to 5$ which is not $\ast$-connecting since $3 \in K$ so $2 \perp_\ast 5 | 3$. 
\end{example}

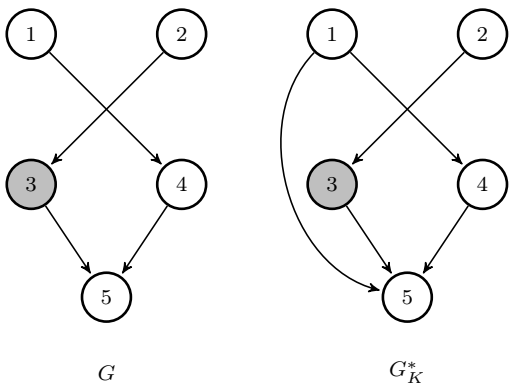
\begin{figure}
    \centering
    \begin{tikzpicture}[->,>=stealth',shorten >=1pt,auto,node distance=2cm,semithick]
    \tikzstyle{every node}=[circle,line width =1pt,font=\scriptsize,minimum height =0.65cm]
    
    \node (i1) [draw] {1};
    \node (i2) [right of = i1, draw] {2};
    \node (i3) [below of = i1, draw, fill=gray!50] {3};
    \node (i4) [below of = i2, draw] {4};
    \node (i5) [below of = i3, xshift = 1cm, yshift = .5cm, draw] {5};
    \node (l1) [below of = i5, yshift = 1cm] {$G$}; 
    
    \path (i1) edge (i4);
    \path (i2) edge (i3);
    \path (i3) edge (i5);
    \path (i4) edge (i5);
    
    \node (j1) [right of  = i2, draw] {1};
    \node (j2) [right of = j1, draw] {2};
    \node (j3) [below of = j1, draw, fill=gray!50] {3};
    \node (j4) [below of = j2, draw] {4};
    \node (j5) [below of = j3, xshift = 1cm, yshift = .5cm, draw] {5};
    \node (l2) [below of = j5, yshift = 1cm] {$G_K^\ast$};
    
    \draw [bend right = 60, ->] (j1) to (j5);
    \path (j1) edge (j4);
    \path (j2) edge (j3);
    \path (j3) edge (j5);
    \path (j4) edge (j5);
    
    \end{tikzpicture}
    \caption{A DAG $G$ and the corresponding conditional reachability graph $G_K^\ast$ for $K = \{ 3\}.$}
    \label{fig:CondReachGraph}
\end{figure}

\begin{remark}
With this definition, we can say that for $I, J, K \subseteq [n]$ pairwise disjoint sets, $I$ and $J$ are \emph{$*$-separated} 
by $K$ in $G$ if and only if there are no $*$-connecting paths from $I$ to $J$ in $G^*_K$. 
\end{remark}

Checking $*$-separation in $G^*_K$ is the analogue of checking d-separation as undirected separation in a corresponding moralized graph, see \cite[Proposition 3.25]{lauritzen1996graphical}. 

\begin{example}\label{ex:cassio2}
To provide some intuition on the \emph{`at most one collider'} condition in Definition~\ref{def:starsep}, we consider again the Cassiopeia graph from Example~\ref{ex:cassioCI} and the path between $1$ and $3$ with two colliders in $K=\{ 4,5\}$. For simplicity, let all nonzero coefficients $c_{ij}$ equal to one. Then the max-linear model states that
\[ X_1 \vee X_2 \leq  x_4 \quad \text{ and } \quad  X_2 \vee X_3 \leq  x_5 . \]
If $x_4 < x_5$, then $X_3 \leq x_5$ so that $x_5$ cannot be caused by $X_2$ but only by $X_3$ or $Z_5$. Analogously, if $x_4 \geq x_5$ then $x_4$ can only be caused by $X_1$ or $Z_4$. Finally, if $x_4 = x_5$ we must have $X_2 = x_4 = x_5$ (since the $Z_i$ are atom-free) so that both are caused by $X_2$. In any of the three cases, we have that $1 \ci 3 \cd \{4,5\}$.
\end{example}

\section{Solving Markov Equivalence}\label{sec:mainresult}
In this section we compare Markov equivalence under d-separation and $\ast$-separation and show that they give the same Markov equivalence classes.

\begin{definition}\label{def:Markoveq}
Two graphs $G$ and $H$ with vertex set $V$ are called \emph{Markov equivalent} if they yield the same set of conditional independence statements under a global Markov property, i.e., for all pairwise disjoint $I, J, K \subseteq V$,
\[ I \perp J \cd K \implies I \ci J \cd K   \]
where $\perp$ is a separation criterion. 
\end{definition}

The following theorem by \cite{verma1990equivalence,verma1992algorithm} characterizes which graphs are Markov equivalent when considering the d-separation criterion. An alternative proof can also be found in \citep{AMP97}. 

\begin{theorem}[\cite{verma1990equivalence}, Theorem 1] \label{thm:dSepMarkovEquiv}
Two directed acyclic graphs $G$ and $H$ are Markov equivalent under the d-separation criterion if and only if the following two conditions hold:
\begin{enumerate}
    \item $G$ and $H$ have the same skeleton, which is the undirected graph obtained by removing edge directions. 
    \item $G$ and $H$ have the same unshielded colliders, which are triples $i, j, k \in [n]$ which induce a subgraph of the form $i \rightarrow k \leftarrow j$.  
\end{enumerate}
\end{theorem}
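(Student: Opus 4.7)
The plan is to prove both directions of the equivalence separately, with the backward (sufficiency) direction being substantially harder than the forward direction.

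For necessity, I would argue by contrapositive, exhibiting a CI statement that distinguishes $G$ and $H$ whenever they differ in either invariant. Suppose first that $i,j$ are adjacent in $G$ but not in $H$. The direct edge forces $i$ and $j$ to be d-connected in $G$ given every subset $K \subseteq [n] \setminus \{i,j\}$, whereas in $H$ they are d-separated by some set (a canonical choice being $\pa_H(i)$ when $j \notin \de_H(i)$, and $\pa_H(j)$ otherwise), so the corresponding CI statement distinguishes the two graphs. For the collider invariant, assume $i \to k \leftarrow j$ is an unshielded collider in $G$ but not in $H$. By the skeleton equality just established, $i$ and $j$ are non-adjacent in $H$ and $k$ is adjacent to both, so $k$ is a non-collider on every $i$-$k$-$j$ path in $H$. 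One can therefore exhibit a set $K$ with $k \in K$ that d-separates $i$ from $j$ in $H$; in $G$, however, this same $K$ opens the collider at $k$ on $i \to k \leftarrow j$, giving a d-connection and hence a distinguishing CI statement.

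For sufficiency, my plan is to use Chickering's covered-edge reversal argument. Call an edge $i \to j$ of $G$ \emph{covered} if $\pa_G(j) = \pa_G(i) \cup \{i\}$. The argument has two key steps: (a) reversing a covered edge leaves both the skeleton and the set of unshielded colliders invariant and preserves all d-separation statements; and (b) whenever $G$ and $H$ share their skeleton and unshielded colliders, $G$ can be transformed into $H$ by a finite sequence of covered-edge reversals. Step (a) is a local path-surgery argument: the covered condition means any d-connecting path passing through the reversed edge can be rerouted through a shared parent, while paths avoiding that edge are unaffected in their open/blocked status. Chaining (a) along the sequence supplied by (b) yields equality of the d-separation relations.

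The main obstacle will be step (b), the combinatorial existence statement for the required sequence of reversals. The standard argument inducts on the number of edges oriented differently between $G$ and $H$, picking at each step a covered edge of $G$ whose orientation disagrees with $H$ and reversing it. Exhibiting such a covered edge requires a delicate topological-ordering argument: one orders the vertices so that the edges in question can be analyzed minimally with respect to that order, then uses the matching skeletons to guarantee the edges exist and the matching unshielded-collider set to rule out the parent-mismatch obstructions that would otherwise obstruct coveredness. Once this combinatorial lemma is established, the proof of sufficiency assembles cleanly, and combined with the necessity argument above it yields the claimed characterization.
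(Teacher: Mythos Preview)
The paper does not give its own proof of this statement: Theorem~\ref{thm:dSepMarkovEquiv} is quoted from \cite{verma1990equivalence,verma1992algorithm} (with \cite{AMP97} mentioned as an alternative source) and is used only as a tool in the proof of Lemma~\ref{lemma:starSurgery} and Theorem~\ref{thm:starSepEquiv}. So there is nothing in the paper to compare your proposal against.

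That said, your outline is a legitimate route to the result, essentially Chickering's transformational proof rather than the original Verma--Pearl argument. Two small points to tighten. In the necessity argument for unshielded colliders, you assert that one can exhibit a separating set $K$ in $H$ with $k \in K$; the clean way to get this is to note that \emph{every} set d-separating $i$ and $j$ in $H$ must contain $k$, since otherwise the two-edge path $i$--$k$--$j$ (on which $k$ is a non-collider in $H$) would be open. In the sufficiency direction, your step (b) is indeed the crux, and the induction you sketch is correct in spirit, but be aware that reversing a covered edge can in principle increase the number of disagreeing edges before it ultimately decreases; Chickering's argument handles this by choosing the edge to reverse carefully with respect to a fixed topological order of $H$, and you should make sure your ``delicate topological-ordering argument'' tracks a well-founded measure that actually decreases. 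With those caveats, the plan is sound.
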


We use $\sim_d$ and $\sim_\ast$ to denote Markov equivalence under d-separation and $\ast$-separation respectively. Since every $\ast$-connecting path is also d-connecting, it holds that if $I \perp_d J | K$ then $I \perp_\ast J | K$.  Consequently, one might think that the additional statements obtained from $\ast$-separation would refine the Markov equivalence classes of d-separation but this is not the case. We will show that the Markov equivalence classes are actually the same. The following lemma will be useful in the proof.

\begin{lemma}
\label{lemma:starSurgery}
Let $K \subseteq [n]$. Let $G$ and $H$ be DAGs on $[n]$ such that $G \sim_d H$ and suppose $i \rightarrow j \in G_K^\ast$. Then one of the following holds:
\begin{enumerate}
    \item $i \rightarrow j \in H_K^\ast$,
    \item $j \rightarrow i \in H_K^\ast$,
    \item $i$ and $j$ have a common parent $\ell$ in $H_K^\ast$, i.e., there exists $l \in [n]$ such that $\ell \rightarrow i, \ell \rightarrow j \in H_K^\ast$.  
\end{enumerate}
\end{lemma}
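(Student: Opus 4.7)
The plan is to induct on the length $m$ of a directed path $\pi \colon i = v_0 \to v_1 \to \cdots \to v_m = j$ in $G$ witnessing $i \to j \in G_K^\ast$, so that $v_1, \ldots, v_{m-1} \notin K$. The base case $m = 1$ is immediate: the edge $v_0 v_1$ lies in the skeleton shared by $G$ and $H$, so in $H$ it is oriented either $i \to j$ (giving conclusion (1)) or $j \to i$ (giving conclusion (2)).

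For the inductive step I would classify each edge $v_{s-1} v_s$ of $\pi$ by its orientation in $H$, calling it \emph{forward} if $v_{s-1} \to v_s$ in $H$ (matching $G$) and \emph{backward} otherwise. Suppose first that the orientation pattern on $\pi$ contains a forward-then-backward pair, so $v_{s-1} \to v_s \leftarrow v_{s+1}$ in $H$ for some index $s$. This triple is a collider in $H$ but a chain $v_{s-1} \to v_s \to v_{s+1}$ in $G$; by Theorem~\ref{thm:dSepMarkovEquiv} the unshielded colliders of $G$ and $H$ coincide, so the collider in $H$ must be shielded, and $v_{s-1}, v_{s+1}$ are adjacent in the common skeleton. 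Acyclicity of $G$ together with the sub-chain $v_{s-1} \to v_s \to v_{s+1}$ forces this edge to be oriented $v_{s-1} \to v_{s+1}$ in $G$. The spliced path $i \to \cdots \to v_{s-1} \to v_{s+1} \to \cdots \to j$ is then a directed path in $G$ of length $m-1$ whose intermediate vertices still avoid $K$, and the induction hypothesis applied to this shorter witness of $i \to j \in G_K^\ast$ yields one of the three conclusions for $H_K^\ast$.

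If no forward-then-backward transition appears, the orientation sequence on $\pi$ has the form $B^a F^b$ with $a + b = m$ (all backward edges preceding all forward edges). When $a = 0$ the path $\pi$ itself is directed in $H$, giving $i \to j \in H_K^\ast$, and when $b = 0$ its reverse gives $j \to i \in H_K^\ast$. When $a, b \geq 1$, both edges incident to $v_a$ in $H$ point outward, namely $v_a \to v_{a-1}$ and $v_a \to v_{a+1}$. Tracing the two blocks yields directed paths $v_a \to v_{a-1} \to \cdots \to v_0 = i$ and $v_a \to v_{a+1} \to \cdots \to v_m = j$ in $H$ whose intermediate vertices lie in $\{v_1, \ldots, v_{m-1}\}$ and hence avoid $K$. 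Since $1 \leq a \leq m-1$, we have $v_a \notin \{i, j\}$, so $v_a$ is the desired common parent in $H_K^\ast$, giving conclusion (3).

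The main delicate step is the collider case that drives the induction: one must use Theorem~\ref{thm:dSepMarkovEquiv} to recognize the offending collider in $H$ as a shielded triple and then exploit acyclicity of $G$ to orient the shielding edge in the only direction consistent with the pre-existing sub-chain. Everything else amounts to bookkeeping verifying that the intermediate vertices of the newly constructed paths in $H$ (or of the shorter witness path in $G$) remain outside $K$.
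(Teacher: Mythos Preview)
Your proof is correct and follows the same core strategy as the paper: transfer the witnessing path to $H$ via the shared skeleton, use the matching of unshielded colliders (Theorem~\ref{thm:dSepMarkovEquiv}) to shortcut any collider that appears in the $H$-orientation, and then read off one of (1)--(3) from the collider-free shape $B^aF^b$. The one organizational difference is where the splice happens: you cut the offending collider out of the $G$-path (obtaining a shorter \emph{directed} witness of $i\to j\in G_K^\ast$ and invoking the induction hypothesis on its length), whereas the paper cuts it out of the $H$-path and iterates there. Your version is slightly tidier, since acyclicity of $G$ forces the shielding edge to be oriented $v_{s-1}\to v_{s+1}$, so the spliced $G$-path is automatically directed and still avoids $K$ internally; by contrast the paper has to remark that splicing in $H$ may shift rather than eliminate a collider, and relies on the length decrease alone for termination.
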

\begin{proof}
The existence of the edge  $i \rightarrow j \in G_K^\ast$ implies that there is a directed path $\pi_G$ from $i$ to $j$ in $G$ that does not factor through $K$. Since it holds that 
$G \sim_d H$ hence $G$ and $H$ have the same skeleton. This means there exists a path $\pi_H$ between 
$i$ and $j$ in $H$ whose edge directions we now consider. 

First we suppose that $\pi_H$ has no colliders on it. Then $\pi_H$ must be a directed path from $i$ to $j$, a directed path from $j$ to $i$, or has exactly one source $\ell \neq i, j$ on it. If $\pi_H$ is a directed path from $i$ to $j$ then (1) holds since there is no vertex in $K$ on $\pi$ and similarly if $\pi_H$ is a directed path from $j$ to $i$ then (2) holds. If there is a source $\ell \neq i, j$ then we get that (3) holds since there must be a directed paths from $\ell$ to both $i$ and $j$ and no vertex on these paths can be in $K$. 

Now we suppose that there is at least one collider on $\pi_H$. We will show a different path exists by looking at the colliders on the path. Suppose $v$ is a collider on $\pi_H$ and let $u \rightarrow v \leftarrow w$ be path $\pi_H$ locally around $v$. The local path around $v$ in $\pi_G$ must have the form $u \rightarrow v \rightarrow w$ since $\pi_G$ is the path that induces the edge $i \rightarrow j \in G_K^\ast$. Since we have that $G \sim_d H$ they must have the same unshielded colliders by Theorem \ref{thm:dSepMarkovEquiv} and since the triple $(u, v, w)$ is not an unshielded collider in $G$, it cannot be an unshielded collider in $H$. Combining this with the fact that locally around $v$, $\pi_H$ has the form $u \rightarrow v \leftarrow w$, there must either exist an edge $u \rightarrow w$ or $u \leftarrow w$ in $H$. So we can create a new path, $\pi_H'$, in $H$ by replacing $u \rightarrow v \leftarrow w$ with the edge between $u$ and $w$. The path $\pi_H'$ is one edge shorter than $\pi_H$ and has one less collider or one of the vertices $u, w$ has become a collider. We can inductively apply the same argument to $\pi_H'$ though until we obtain a path between $i$ and $j$ with no colliders or we end up with a direct edge between $i$ and $j$. If we have a direct edge between $i$ and $j$ then either (1) or (2) holds and if we have a path with no colliders than the result holds by the previous paragraph.  
\end{proof}

We are now ready to prove our promised first main result.

\begin{theorem}
\label{thm:starSepEquiv}
d-separation and $*$-separation induce the same Markov equivalence classes on a DAG $G$.
\end{theorem}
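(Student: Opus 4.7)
By Theorem~\ref{thm:dSepMarkovEquiv}, $G \sim_d H$ if and only if $G$ and $H$ share the same skeleton and the same unshielded colliders. The plan is to show that $G \sim_\ast H$ is characterized by the \emph{same} two conditions, so that the two notions of Markov equivalence coincide. This amounts to proving the two implications (i) same skeleton and unshielded colliders $\Longrightarrow G \sim_\ast H$, and (ii) $G \sim_\ast H \Longrightarrow$ same skeleton and unshielded colliders.

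\textbf{Direction (i).} Assume $G \sim_d H$ and, by symmetry, that $I \not\perp_\ast J \cd K$ in $G$. Using the conditional reachability DAG formulation of $\ast$-separation, there is a $\ast$-connecting path $\pi$ in $G^\ast_K$ between some $i \in I$ and $j \in J$. Such a $\pi$ has at most one collider (which must lie in $K$) and all non-collider vertices outside $K$, so it takes one of the five shapes (a)--(e) in Figure~\ref{fig:starConnecting}. I apply Lemma~\ref{lemma:starSurgery} to each edge of $\pi$: each $u \to v$ in $G^\ast_K$ becomes in $H^\ast_K$ either the same edge, its reverse, or a wedge $u \leftarrow \ell \to v$ with $\ell \notin K$ (since $\ell$ is internal to a directed path of $G$ avoiding $K$). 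Combining these local replacements and doing a case analysis across the five shapes, I exhibit in each case an explicit $\ast$-connecting path in $H^\ast_K$, yielding $I \not\perp_\ast J \cd K$ in $H$.

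\textbf{Direction (ii).} Assume $G \sim_\ast H$. For the skeleton, two vertices $i, j$ are adjacent iff the single-edge path $\ast$-connects them given every $K$, and non-adjacent vertices can always be $\ast$-separated (since any d-separator is a $\ast$-separator, as every $\ast$-connecting path is d-connecting). So adjacency is determined by the $\ast$-separation statements, forcing $G$ and $H$ to have the same skeleton. To recover unshielded colliders, suppose for contradiction that $(i, k, j)$ is an unshielded collider in $G$ but not in $H$. In $H$ the triple then has a non-collider orientation, so $k \in \An_H(\{i, j\})$, and the standard ancestral construction $K = \An_H(\{i, j\}) \setminus \{i, j\}$ d-separates the non-adjacent vertices $i, j$ in $H$ while containing $k$. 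Therefore $i \perp_d j \cd K$ in $H$, giving $i \perp_\ast j \cd K$ in $H$. In $G$, however, the path $i \to k \leftarrow j$ is $\ast$-connecting given $K$ (single collider $k \in K$, no non-colliders), so $i \not\perp_\ast j \cd K$ in $G$, contradicting $G \sim_\ast H$.

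\textbf{Main obstacle.} The nontrivial work is Direction (i). Lemma~\ref{lemma:starSurgery} only replaces one edge at a time, and after applying it to every edge of $\pi$ one must verify that the concatenated local replacements still form a $\ast$-connecting path in $H^\ast_K$, i.e.\ still have at most one collider and still keep every non-collider out of $K$. The hard cases are shapes (c)--(e), where $\pi$ already has a collider at some $k \in K$ and the two edges incident to $k$ could be reoriented by the lemma in incompatible ways (e.g.\ one edge keeping its head at $k$ while the other reverses out of $k$, producing a blocked chain through $k$). Controlling these combinations requires using the same-unshielded-colliders hypothesis on the directed paths of $G$ encoded by the edges of $G^\ast_K$ near $k$, and often replacing the apparent chain through $k$ by a direct edge or shorter wedge available in $H^\ast_K$. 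I expect this case analysis to be the heart of the proof.
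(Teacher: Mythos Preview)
Your Direction (i) is exactly the paper's proof: assume $G\sim_d H$, take a $\ast$-connecting path in $G^\ast_K$ of one of the shapes (a)--(e), apply Lemma~\ref{lemma:starSurgery} edge by edge, and run a case analysis to produce a $\ast$-connecting path in $H^\ast_K$. Your identification of the ``main obstacle'' --- the edges incident to the unique collider $k\in K$ being reoriented incompatibly, and the resolution via the same-unshielded-colliders hypothesis applied to the underlying $G$-paths near $k$ --- is precisely what the paper does in its (c)--(e) subcases (e.g.\ ruling out the (1)/(2) combination at $k$ because it would destroy an unshielded collider of $G$ in $H$).

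Your Direction (ii) actually goes beyond the paper: the paper's proof only establishes $G\sim_d H \Rightarrow G\sim_\ast H$ and does not argue the converse. Your argument that the skeleton and the unshielded colliders are recoverable from $\ast$-separation statements alone is correct. For the skeleton this is immediate (a single edge is $\ast$-connecting given every $K$, and for non-adjacent $i,j$ any d-separator is a $\ast$-separator). For unshielded colliders, your choice $K=\An_H(\{i,j\})\setminus\{i,j\}$ works: one checks directly that any d-connecting path given this $K$ would force an internal non-collider to be an ancestor of $i$ or $j$, hence in $K$, a contradiction; and the residual length-two collider case is excluded by acyclicity. Since $k$ is then a parent of $i$ or $j$ in $H$, one has $k\in K$, so $i\to k\leftarrow j$ is $\ast$-connecting in $G$, giving the desired contradiction. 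This is a clean completion of the equivalence that the paper leaves implicit.
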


\begin{proof}
Let $G \sim_d H$. We will show that for any $K \subseteq [n]$, if there exists a $\ast$-connecting path $\pi_G$ between $i$ and $j$ in $G$ then there is a $\ast$-connecting path $\pi_H$ between $i$ and $j$ in $H$. This implies that $G$ and $H$ have the same $\ast$-separations and hence are Markov equivalent with respect to $\ast$-separation. So fix $K$ and let $\pi_G$ be a $\ast$-connecting path between $i$ and $j$ in $G$ conditioned on $K$. We now argue that the desired path $\pi_H$ exists based on which of the five possible forms displayed in Figure \ref{fig:starConnecting} $\pi_G$ may take. For each of the possible $\ast$-connecting paths $\pi_G$, we apply Lemma \ref{lemma:starSurgery} to each edge in $\pi_G$ and analyze the possible resulting graphs. Throughout the rest of the proof, we denote by (1), (2), and (3) the three outcomes that we can get by applying Lemma \ref{lemma:starSurgery} to an edge in $\pi_G$. 

\begin{enumerate}[label=(\alph*), wide, labelwidth=!, labelindent=0pt]
\item Suppose $\pi_G$ has the form of path (a) in Figure \ref{fig:starConnecting}. Then  Lemma \ref{lemma:starSurgery} gives three possibilities which are all $\ast$-connecting paths between $i$ and $j$. 

\item Suppose $\pi_G$ has the form of path (b) in Figure \ref{fig:starConnecting}. We analyze these cases up to the symmetry obtained by interchanging $i$ and $j$.
\begin{itemize}
    \item If (1) holds for both edges then the path $j \leftarrow j' \rightarrow i$ is also in $H_K^\ast$.
    \item If (1) holds for either edge with (2) holding for the other then we have a directed path between $i$ and $j$ in $H_K^\ast$ which implies the existence of a directed path between $i$ and $j$ in $H$ that does not factor through $K$ hence we $i \rightarrow j \in H_K^\ast$ which is $\ast$-connecting as well.
    \item If (1) holds for $j' \rightarrow j$ and (3) holds for $j' \rightarrow i$ then there exists an $\ell$ such that $j' \leftarrow \ell \rightarrow i \in H_K^\ast$. The existence of these two paths implies that $j \leftarrow \ell \rightarrow i \in H_K^\ast$. 
    \item If (2) holds for both edges, (3) holds for both edges, or (2) holds for one edge and (3) for the other then $j'$ would be a collider in $H_K^\ast$. If such a collider exists though then we are guaranteed a path directly from $i$ to $j$ in $H$ by the proof of Lemma \ref{lemma:starSurgery} which means we have a path of type (a) in $H_K^\ast$. 
\end{itemize}

\item Suppose $\pi_G$ has the form of path (c) in Figure \ref{fig:starConnecting}. We again use the symmetry between $i$ and $j$ to reduce the number of cases.  
    \begin{itemize}
        \item If (1) holds for both edges then the path is unchanged and hence $\ast$-connecting.
        \item Suppose (1) holds for the edge $i \rightarrow k$ and (2) holds for the edge $j \rightarrow k$.
        Then locally around $k$, $\pi_G$ has the form $u \rightarrow k \leftarrow v$ but in $H$ this path must be of the form $u \rightarrow k \rightarrow v$ which means that the triple $(u, k, v)$ is no longer an unshielded collider in $H$ though it was in $G$. This contradicts the assumption that $G \sim_d H$ though so this scenario is not possible. The same argument applies to the case where (2) holds for both edges and the case where (2) and (3) hold for the edges. 
        \item Suppose (1) holds for the edge $i \rightarrow k$ and (3) holds for the edge $j \rightarrow k$. Then there exists a common parent $j'$ of $j$ and $k$ which implies the existence of a path of type (d) between the vertices $i, j, j',$ and $k$. 
        \item Suppose (3) holds for both edges. Then there exists a common parent $i'$ of $i$ and $k$ as well as a common parent $j'$ of $j$ and $k$. This means that there is a path of type (e) between these vertices which is $\ast$-connecting. 
    \end{itemize}
    
    \item Suppose $\pi_G$ has the form of path (d) in Figure \ref{fig:starConnecting}. We consider the different cases that can arise based on the cases we had when $\pi_G$ had the form (c) and analyze how adding the edge between $j$ and $j'$ affects these cases. 
    \begin{itemize}
        \item Suppose (1) holds for the edge $i \rightarrow k$ and (2) holds for the edge $j' \rightarrow k$. Regardless of the status of the edge $j' \rightarrow j$, we know from the previous case that the graph $H$ is missing an unshielded collider which contradicts $G \sim_d H$. Just as in the previous case, the same argument applies to the case where (2) holds for both edges and the case where (2) and (3) hold for the edges.  
        \item Now suppose that any of the other cases hold. Then we know there is a $\ast$ connecting path between $j'$ and $i$ of type (c), (d), or (e). Furthermore, there is a $\ast$-connecting path between $j$ and $j'$ of type (a) or (b). If the path between $j'$ and $i$ is of type (c) then taking the union of this path with any of the configurations of the path between $j$ and $j'$ gives a $\ast$-connecting path between $i$ and $j$ of type (c), (d), or (e) still. If the path between $j'$ and $i$ is of type (e) and the path between $j$ and $j'$ is of type (b) then $j'$ will be a collider in the union of these paths. This path has the form $j \leftarrow m \rightarrow j' \leftarrow \ell \rightarrow k \leftarrow \ell' \rightarrow i$. Again though by the proof of Lemma \ref{lemma:starSurgery}, we know that the parents of $j'$ must be moral since $j'$ is not a collider on $\pi_G$ and $G \sim_d H$. This means that we have an edge $m \rightarrow \ell$ and thus the path $j \leftarrow m \rightarrow k \leftarrow \ell \rightarrow i$ which is a path of type (d) between $i$ and $j$. The remaining configurations follow in the exact same way. 
    \end{itemize}
    
     \item Suppose $\pi_G$ has the form of path (e) in Figure \ref{fig:starConnecting}. This case follows in a very similar way to the type (d) case. We can again rule out all of the cases where $k$ is no longer a collider. The remaining cases then follow from the same argument used previously. We know that $j$ and $i'$ will have a $\ast$-connecting path between them of type (c), (d), or (e) by the previous case. We also know from the previous case that taking the union of this path with any of the possible configurations of the path between $i'$ and $i$ also gives a $\ast$-connecting path. 
\end{enumerate}

This completes the proof since the desired path exists in every case.
\end{proof}

\begin{remark}
The equality of equivalence classes in Theorem~\ref{thm:starSepEquiv}  holds for generic coefficient matrices $C$ supported on $G$. For special choices of $C$, there may be more valid CI statements.
\end{remark}

The following example illustrates Theorem \ref{thm:starSepEquiv}. 

\begin{example}
\label{ex:starSepEquiv}
Let $G$ be the graph pictured in Figure \ref{fig:starSepEquiv}. Let $H$ be obtained from $G$ by reversing the edge $3 \to 6$ and let $F$ be obtained by reversing the edge $6 \to 7$ in $H$. Then the graphs $G, H$ and $F$ form a Markov equivalence class under d-separation since each of the reversals of edges does not change the unshielded colliders but altering any other edges would. It is also straightforward to check that any additional conditional independence statements that come from applying the $\ast$-separation criterion to $G$ are also valid for $H$ and $F$. For example, suppose $K = \{4, 5\}$ and observe that in all three conditional reachability graphs there is a unique path between 1 and 6 in which there are always two colliders and thus cannot be $\ast$-connecting. So the additional statements that $\ast$-separation gives do not refine this Markov equivalence class for d-separation and thus we see this is also a Markov equivalence class with respect to $\ast$-separation.
\end{example}

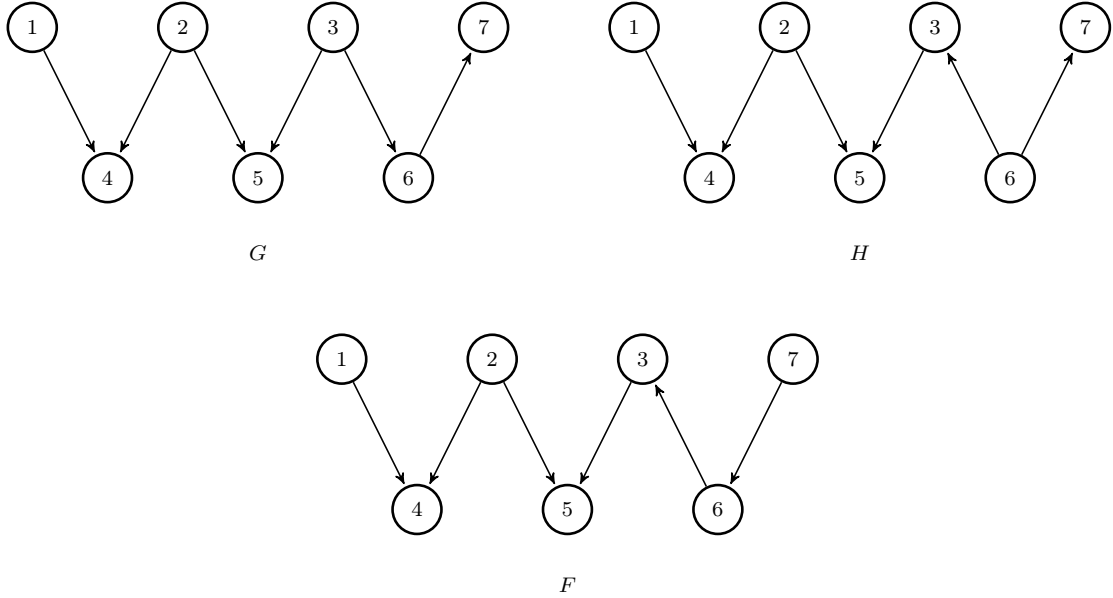
\begin{figure}[t]
\begin{center}
    \begin{tikzpicture}[->,>=stealth',shorten >=1pt,auto,node distance=2cm,semithick]
        \tikzstyle{every node}=[circle,line width =1pt,font=\scriptsize,minimum height =0.65cm]
        
        \node (i1) [draw] {1};
        \node (i2) [right of = i1, draw] {2};
        \node (i3) [right of = i2, draw] {3};
        \node (i4) [below of = i1, xshift = 1cm, draw] {4};
        \node (i5) [below of = i2, xshift = 1cm, draw] {5};
        \node (i6) [below of = i3, xshift = 1cm, draw] {6};
        \node (i7) [right of = i3, draw] {7};
        \node (l1) [below of = i5, yshift = 1cm] {$G$};
        
        \path (i1) edge (i4);
        \path (i2) edge (i4);
        \path (i2) edge (i5);
        \path (i3) edge (i5);
        \path (i3) edge (i6);
        \path (i6) edge (i7);
        
        \node (j1) [right of = i7, draw] {1};
        \node (j2) [right of = j1, draw] {2};
        \node (j3) [right of = j2, draw] {3};
        \node (j4) [below of = j1, xshift = 1cm, draw] {4};
        \node (j5) [below of = j2, xshift = 1cm, draw] {5};
        \node (j6) [below of = j3, xshift = 1cm, draw] {6};
        \node (j7) [right of = j3, draw] {7};
        \node (l2) [below of = j5, yshift = 1cm] {$H$};
        
        \path (j1) edge (j4);
        \path (j2) edge (j4);
        \path (j2) edge (j5);
        \path (j3) edge (j5);
        \path (j6) edge (j3);
        \path (j6) edge (j7);
        
        \node (k1) [below right of = i5, xshift = -.3cm, yshift = -1cm, draw] {1};
        \node (k2) [right of = k1, draw] {2};
        \node (k3) [right of = k2, draw] {3};
        \node (k4) [below of = k1, xshift = 1cm, draw] {4};
        \node (k5) [below of = k2, xshift = 1cm, draw] {5};
        \node (k6) [below of = k3, xshift = 1cm, draw] {6};
        \node (k7) [right of = k3, draw] {7};
        \node (l3) [below of = k5, yshift = 1cm] {$F$};
        
        \path (k1) edge (k4);
        \path (k2) edge (k4);
        \path (k2) edge (k5);
        \path (k3) edge (k5);
        \path (k6) edge (k3);
        \path (k7) edge (k6);
        
    \end{tikzpicture}
    \caption{The graphs $G$, $H$, and $F$ used in Example \ref{ex:starSepEquiv}. These three graphs form a Markov equivalence class with respect to d- and $\ast$-separation. }
    \label{fig:starSepEquiv}
    \end{center}
\end{figure}

Note that as a corollary of Theorem \ref{thm:starSepEquiv}, the problem of counting the number of Markov equivalence classes for max-linear Bayesian networks is equivalent to the classical one, as studied in \cite{gillispie2001enumerating,radhakrishnan2017counting,radhakrishnan2018counting}.

\section{Tropical Trek Rule}\label{sec:troptrek}
In this section we define a new matrix, $\Sigma^\mathrm{trop}$, associated to a max-linear model that is a natural analogue of the covariance matrix for Gaussian distributions. We then show that this matrix can be obtained by tropicalizing the trek rule of \cite{STD10} which is used to parameterize the covariance matrix of a directed Gaussian graphical model. This immediately implies tropical rank constraints on $\Sigma^\mathrm{trop}$ which correspond to conditional independence statements for the model. 

We begin with some background on directed Gaussian graphical models and the trek rule. For additional background we refer the reader to \cite[Chapter 13]{sullivant18}. Recall that a random vector $X$ is distributed according to the directed Gaussian graphical model for a graph, $G$, if it satisfies the recursive structural equations
$X = C X + Z$ where $C$ is the weighted adjacency matrix of $G$ and $Z$ is a Gaussian random vector with diagonal covariance matrix $\Omega$. The matrices $C$ and $\Omega$ are the parameters of the model. The recursive structural equations have solution
$X = (Id-C)^{-1} Z$ where $(Id - C)^{-1}$ plays the role that $C^\ast$ plays in the max-linear model. The covariance matrix $\Sigma$ admits the factorization $\Sigma = (Id - C)^{-1} \Omega (Id-C)^{-T}$ and this factorization can be used to interpret the entries of $\Sigma$ combinatorially. 

\begin{definition}
\label{def:trek}
A \emph{trek} $\tau$ from $i$ to $j$ is an alternating sequence of nodes and edges of the form
\[ i\leftarrow i_{l}\leftarrow \dots \leftarrow  i_1\xleftarrow[]{\hspace{0.5cm}} i_0\xrightarrow[]{\hspace{0.5cm}}  j_1\to \dots \to j_{r}\to j. \]
(a trek takes you up and down a `mountain'). The \emph{top} of the trek is $\tp(\tau) = i_0$, the \emph{left-hand side} of the trek, $\lhs(\tau) = \{i_0,i_1,\dots,i_{l},i\}$ and the \emph{right-hand side} of the trek is $\rhs(\tau) = \{i_0,j_1,\dots,j_{r},j\}$. We also allow \emph{trivial} treks with a single node $i$ that have $\lhs(\tau) = \rhs(\tau) = \{ i \}$.
\end{definition}

For a trek $\tau$ with top $i_0$ in a DAG with edge weights given by $C$ we can naturally define a trek monomial:
\begin{equation}
\label{def:trekmon}
    \tau(C,\Omega) = \omega_{i_0 i_0} \prod_{k\to l\in \tau} c_{lk}.
\end{equation}

\begin{proposition}[Trek Rule]
(see e.g.~ \cite{STD10})
\label{prop:trekrule}
Let $X$ be distributed according to the directed Gaussian graphical model on $G$ with parameters $C$ and $\Omega$. Then the covariance matrix, $\Sigma$, of $X$ satisfies
\begin{equation}\label{eq:trekrule}
    \Sigma_{ij} = \sum_{\tau\in\mathcal{T}(i,j)}
    \tau(C,\Omega), \qquad i,j\in [n].
\end{equation}
where $\mathcal{T}(i,j)$ denotes the set of all treks from $i$ to $j$.
\end{proposition}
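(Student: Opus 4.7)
The plan is to start from the explicit matrix factorization of the covariance. Since $X = CX + Z$ and $C$ is strictly upper/lower triangular after a topological ordering of the DAG (so in particular nilpotent), the structural equation can be solved as $X = (I-C)^{-1}Z$. Taking covariance of both sides and using that $Z$ has covariance $\Omega$ yields
\[ \Sigma = (I-C)^{-1}\Omega(I-C)^{-T}. \]
The goal is then to unfold the right-hand side as a sum over combinatorial objects on $G$, and recognize those objects as precisely the treks of Definition~\ref{def:trek}.

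The key step is the combinatorial interpretation of $(I-C)^{-1}$. Because $C$ is nilpotent, the Neumann series
\[ (I-C)^{-1} = \sum_{k=0}^{n-1} C^{k} \]
is a finite sum, and expanding the matrix power $(C^{k})_{ij}$ as a sum over sequences of indices shows that $((I-C)^{-1})_{ij}$ equals the sum, over all directed paths $P$ from $j$ to $i$ in $G$, of the product $\prod_{s\to t \in P} c_{ts}$ (with the convention that the trivial path contributes $1$ when $i=j$). This is the usual ``paths as matrix powers'' identity, and it is the only place where the DAG structure enters.

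Next, I would expand the product of three matrices using the fact that $\Omega$ is diagonal:
\[ \Sigma_{ij} = \sum_{k=1}^{n} \bigl((I-C)^{-1}\bigr)_{ik}\, \omega_{kk}\, \bigl((I-C)^{-1}\bigr)_{jk}. \]
For each $k$, substituting the path expansion on each factor writes the $k$-th summand as a double sum over pairs $(P_L, P_R)$, where $P_L$ is a directed path from $k$ to $i$ and $P_R$ is a directed path from $k$ to $j$. Concatenating $P_L$ (read backwards) with $P_R$ produces exactly a trek $\tau$ from $i$ to $j$ with $\tp(\tau) = k$, $\lhs(\tau)$ the vertex set of $P_L$, and $\rhs(\tau)$ the vertex set of $P_R$; the contributed monomial is $\omega_{kk}\prod_{s\to t\in\tau} c_{ts}$, which matches the trek monomial in~\eqref{def:trekmon}. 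Conversely, every trek $\tau \in \mathcal{T}(i,j)$ decomposes uniquely as such a pair with $k=\tp(\tau)$, and trivial treks at $i$ (when $i=j$) correspond to the pair of length-zero paths. Summing over $k$ therefore gives exactly the right-hand side of~\eqref{eq:trekrule}.

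The only potentially confusing issue is bookkeeping: one must fix a convention for whether $c_{ij}$ represents the edge $j\to i$ or $i\to j$, and be consistent between the matrix entries of $(I-C)^{-1}$ and those of its transpose, so that the two path factors genuinely meet at the trek top and no edge orientation is secretly reversed. Once the conventions are aligned, the proof is essentially the bijection between monomials in the matrix expansion and treks, so no further obstacle arises.
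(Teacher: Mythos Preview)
Your argument is correct and is the standard proof of the trek rule: expand $(I-C)^{-1}$ as a Neumann series, interpret the entries as sums over directed paths, and then read the product $(I-C)^{-1}\Omega(I-C)^{-T}$ as a sum over pairs of paths with a common source, i.e.\ treks. Note, however, that the paper does not give its own proof of this proposition at all---it is stated with a citation to \cite{STD10} and then used as background---so there is nothing in the paper to compare your proof against; your write-up simply supplies the omitted standard argument.
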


Since the structural equations of a max-linear model are given by tropicalizing the structural equations of a directed Gaussian graphical model, it is natural to consider the tropicalization of the above trek rule. If the random variable $X$ is distributed according to the max-linear model on $G$ with coefficient matrix $C$ then we call $\Sigma^{\mathrm{trop}} = C^\ast \odot (C^\ast)^T$ the \emph{tropical covariance matrix} for $X$ where the matrix multiplication is in max-times arithmetic. This definition is motivated by the factorization that the covariance matrix of a directed Gaussian graphical model admits, as shown above. 

\begin{theorem}[Tropical Trek Rule]
Let $G = ([n], E)$ be a DAG and $\Sigma^{\mathrm{trop}} = C^\ast \odot (C^\ast)^T$ for a coefficient matrix $C$ supported on $G$. Then
\begin{equation}\label{eq:troptrek}
(\Sigma^{\mathrm{trop}})_{ij} = \bigvee_{\tau \in \mathcal{T}(i,j)} \tau(C, Id).
\end{equation}
\end{theorem}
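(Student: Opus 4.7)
My plan is to prove the identity by direct expansion, exploiting the well-known combinatorial interpretation of the Kleene star $C^\ast$ and matching each term produced by the tropical matrix product $C^\ast \odot (C^\ast)^T$ with a trek on $G$.

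First I would establish the combinatorial meaning of the entries of $C^\ast$. Since $C^\ast = \bigvee_{k=0}^{n-1} C^{\odot k}$ and $(C^{\odot k})_{ij} = \bigvee_{j=i_0,i_1,\dots,i_k=i} c_{i_k i_{k-1}} c_{i_{k-1} i_{k-2}} \cdots c_{i_1 i_0}$, one sees that $(C^{\odot k})_{ij}$ equals the maximum weight of a directed path of length $k$ from $j$ to $i$ in $G$, where the weight of such a path is the product of the entries $c_{lk}$ over its edges $k\to l$. Taking the supremum over $k$ and recalling that $G$ is a DAG (so $(C^{\odot 0})_{ii}=1$ and no nontrivial cycles contribute), $(C^\ast)_{ij}$ is exactly the maximum weight of any directed path from $j$ to $i$, with the convention that the trivial path $i\to i$ has weight $1$.

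Next I would expand the tropical covariance. By definition of the tropical matrix product,
\[
(\Sigma^{\mathrm{trop}})_{ij} = \bigvee_{k\in [n]} (C^\ast)_{ik}\, (C^\ast)_{jk}.
\]
Fix a vertex $k$. Interpreting each factor combinatorially as in the previous step, $(C^\ast)_{ik}\,(C^\ast)_{jk}$ equals the maximum, over all choices of a directed path $\pi_L$ from $k$ to $i$ and a directed path $\pi_R$ from $k$ to $j$, of the product $w(\pi_L)\,w(\pi_R)$. The concatenation of $\pi_L$ and $\pi_R$ is precisely a trek $\tau$ from $i$ to $j$ with $\tp(\tau) = k$, and its trek monomial with $\Omega = Id$ is $\tau(C, Id) = \prod_{k\to l \in \tau} c_{lk} = w(\pi_L)\,w(\pi_R)$. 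Conversely, every trek from $i$ to $j$ arises in this way from a unique choice of top $k$ and a pair $(\pi_L, \pi_R)$ of directed paths.

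Taking the join over $k$ in the display above therefore ranges over all treks from $i$ to $j$, yielding
\[
(\Sigma^{\mathrm{trop}})_{ij} = \bigvee_{\tau \in \mathcal{T}(i,j)} \tau(C, Id),
\]
which is the tropical trek rule. The only subtlety to verify carefully is the treatment of trivial treks: the trek $\tau = (i)$ with $i=j$ contributes $\tau(C, Id) = 1$, and this matches the $k=i$ summand, which equals $(C^\ast)_{ii}(C^\ast)_{ii} = 1\cdot 1 = 1$ coming from the $(C^{\odot 0}) = I_n$ term in the Kleene star. I do not anticipate a genuine obstacle; the proof is a direct bookkeeping exercise, and the main point is to keep the indexing conventions for edge weights consistent between the path description of $C^\ast$ and the trek-monomial definition.
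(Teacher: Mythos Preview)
Your proposal is correct and follows essentially the same approach as the paper's own proof: both expand $(C^\ast \odot (C^\ast)^T)_{ij} = \bigvee_k c^\ast_{ik} c^\ast_{jk}$, interpret each factor $c^\ast_{ik}$ as the maximum weight of a directed path from $k$ to $i$, and then identify the product of two such paths sharing a top $k$ with a trek monomial. Your write-up is slightly more explicit about the bijection and the trivial-trek case, but there is no substantive difference in strategy.
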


\begin{proof}
First note that by construction $c_{ij}^\ast$ is the maximum weight of any path from $j$ to $i$ which means
\[
c_{ij}^\ast = \bigvee_{\pi \in \mathcal{P}(j,i)} \prod_{m \to \ell \in \pi} c_{\ell m}. 
\]
This means that $(C^\ast)_{ij}^T = c_{ji}^\ast$ is the maximum weight of any path from $i$ to $j$. Thus we get that
\begin{align*}
(C^\ast & \odot  (C^\ast)^T)_{ij} = \bigvee_{k} c_{ik}^\ast c_{jk}^\ast 
 \\ &= \bigvee_{k} 
 \left( \bigvee_{\pi \in \mathcal{P}(k,i)} \prod_{m \to \ell \in \pi} c_{\ell m} \right)
\left( \bigvee_{\pi \in \mathcal{P}(k,j)} \prod_{m \to \ell \in \pi} c_{\ell m} \right)
\end{align*}
Note that the last expression corresponds exactly to the trek monomial of the max-weighted trek between $i$ and $j$ which gives the desired result.
\end{proof}

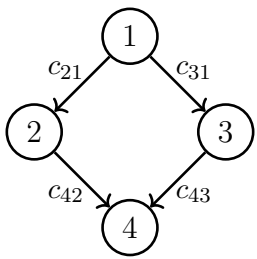
\begin{figure}
    \centering
    \begin{tikzpicture}
        \begin{scope}[->,every node/.style={circle,draw},line width=1pt, node distance=1.8cm]
        \node (1) {$1$};
        \node (2) [below left of=1] {$2$};
        \foreach \from/\to in {1/2}
        \draw (\from) -- (\to);
        \path[every node/.style={font=\sffamily\small}]
        (1) -- (2) node [near start, left] {$c_{21}$};
        \node (3) [below right of=1] {$3$};
        \foreach \from/\to in {1/3}
        \draw (\from) -- (\to);
        \path[every node/.style={font=\small}]
        (1) -- (3) node [near start, right] {$c_{31}$};
        \node (4) [below right of=2] {$4$};
        \foreach \from/\to in {2/4,3/4}
        \draw (\from) -- (\to);
        \path[every node/.style={font=\small}]
        (2) -- (4) node [near end, left] {$c_{42}$};
        \path[every node/.style={font=\small}]
        (3) -- (4) node [near end, right] {$c_{43}$};
        \end{scope}
    \end{tikzpicture}  
    \caption{The Diamond graph $G$ with its edges labeled by coefficients.}
    \label{fig:diamond}
\end{figure}

\begin{example}
Let $G$ be the Diamond graph which is pictured in Figure \ref{fig:diamond}. The tropical trek rule can be used to compute the entries of the tropical covariance matrix corresponding to the max-linear model on $G$.  

For example, there are three treks from $2$ to $4$: 
$$2 \rightarrow 4, \quad 2 \leftarrow 1 \rightarrow 2 \rightarrow 4 \, \, \text{ and } \, \, 2 \leftarrow 1 \rightarrow 3 \rightarrow 4 .$$ Then, according to expression \eqref{eq:troptrek}:
\begin{equation}\label{eq:exampletrekrule}
    \Sigma^\mathrm{trop}_{24} = c_{42} \vee c_{21}^2 c_{42} \vee c_{21}c_{31}c_{43}.
\end{equation}
\end{example}

\section{Tropical Rank Constraints}\label{sec:troprank}

The tropical trek rule allows us to easily show that conditional independence statements that come from d-separation correspond to tropical rank constraints on $\Sigma^\mathrm{trop}$. 

We first recall the following proposition which is the analogous result for Gaussians. 
 
\begin{proposition}
\label{prop:GaussCond}
Let $X$ be a multivariate Gaussian with covariance matrix $\Sigma$ and $I, J, K \subseteq [n]$ be disjoint sets. Then the conditional independence statement $X_I \indep X_J | X_K$ holds if and only if $\rank(\Sigma_{I \cup K, J \cup K}) = \#K$. 
\end{proposition}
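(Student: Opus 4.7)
The plan is to reduce the claim to the classical fact that for a nondegenerate multivariate Gaussian, conditional independence $X_I \indep X_J \cd X_K$ is equivalent to the vanishing of the conditional covariance, i.e.\ the Schur complement
\[
\Sigma_{IJ \cd K} := \Sigma_{IJ} - \Sigma_{IK}\Sigma_{KK}^{-1}\Sigma_{KJ} = 0.
\]
This uses the standard derivation of the conditional distribution of $X_{I \cup J}$ given $X_K$, which I would cite rather than reprove. Positive definiteness of $\Sigma$ (inherited from the nondegeneracy of $X$) guarantees that $\Sigma_{KK}$ is invertible so that the Schur complement is well defined.

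The heart of the argument is then a Schur complement rank identity. Writing the submatrix with rows indexed by $I \cup K$ and columns by $J \cup K$ in block form,
\[
\Sigma_{I \cup K, J \cup K} = \begin{pmatrix} \Sigma_{IJ} & \Sigma_{IK} \\ \Sigma_{KJ} & \Sigma_{KK} \end{pmatrix},
\]
and applying invertible block row operations (subtracting $\Sigma_{IK}\Sigma_{KK}^{-1}$ times the bottom block-row from the top), we obtain
\[
\begin{pmatrix} \Sigma_{IJ \cd K} & 0 \\ \Sigma_{KJ} & \Sigma_{KK} \end{pmatrix}.
\]
Since these operations preserve rank, this yields
\[
\rank(\Sigma_{I \cup K, J \cup K}) = \rank(\Sigma_{KK}) + \rank(\Sigma_{IJ \cd K}) = \#K + \rank(\Sigma_{IJ \cd K}),
\]
where positive definiteness of $\Sigma_{KK}$ gives $\rank(\Sigma_{KK}) = \#K$. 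Combining with the first paragraph, the submatrix has rank equal to $\#K$ if and only if $\Sigma_{IJ \cd K}$ is the zero matrix if and only if $X_I \indep X_J \cd X_K$.

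The main obstacle is bookkeeping rather than any deep idea: one must carefully distinguish the rectangular off-diagonal block $\Sigma_{I \cup K, J \cup K}$ from the symmetric principal submatrices $\Sigma_{I \cup K, I \cup K}$ and $\Sigma_{J \cup K, J \cup K}$, since the proposition is specifically about the submatrix obtained when the row and column index sets overlap only in $K$. With this care, the argument goes through for $I$ and $J$ of possibly different sizes, since the Schur complement identity does not require the outer block to be square.
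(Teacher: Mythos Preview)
Your Schur complement argument is correct and is the standard proof of this fact. Note, however, that the paper does not actually prove this proposition: it is stated as a recalled result (the introduction points to \cite[Proposition~4.1.9]{sullivant18} for the same statement), so there is no ``paper's own proof'' to compare against. Your write-up would serve as a self-contained justification if one were wanted; the only implicit assumption you rely on beyond the stated hypotheses is that $\Sigma$ is positive definite so that $\Sigma_{KK}$ is invertible, which is the usual standing convention in this literature.
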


The trek rule can also be thought of as a map that parameterizes the set of $\Sigma$ that can be produced by the directed Gaussian graphical model on a DAG, $G$. Let $G = ([n], E)$ be a DAG and let
\begin{align*}
    \phi_G : \mathbb{R}^{E} \times \mathbb{R}^{n} &\to  \mathbb{R}^{\binom{n+1}{2}}
\end{align*}
be defined by $$\phi_G(C, \Omega)_{ij} = \sum_{\tau\in\mathcal{T}(i,j)}\tau(C, \Omega).$$ The image of $\phi_G$ is exactly the parameterized Gaussian graphical model associated to $G$. From Section 5 of \cite{ST08} we have that
\[
\image(\trop(\phi_G)) \subseteq \trop(\image(\phi_G)). 
\]
The following two results are an immediate consequence of this containment of tropical varieties. 

\begin{theorem}
\label{thm:tropRank}
Let $G$ be a DAG and $\Sigma^\mathrm{trop}$ be supported on $G$. If $K$ d-separates $I$ and $J$ in the DAG $G$ then $\trank(\Sigma_{I\cup K,J \cup K}^\mathrm{trop}) = \#K$. 
\end{theorem}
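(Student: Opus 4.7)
The plan is to transport the classical Gaussian rank condition from Proposition~\ref{prop:GaussCond} into a tropical rank condition, using the containment $\image(\trop(\phi_G)) \subseteq \trop(\image(\phi_G))$ together with the Tropical Trek Rule just proved.

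First, I would invoke the Gaussian case. Because $K$ d-separates $I$ from $J$ in $G$, the global Markov property for the directed Gaussian graphical model gives $X_I \indep X_J \mid X_K$ for every $\Sigma \in \image(\phi_G)$. By Proposition~\ref{prop:GaussCond}, this is equivalent to $\rank(\Sigma_{I \cup K,\, J \cup K}) = \#K$; in particular, every $(\#K+1)\times(\#K+1)$ minor of $\Sigma_{I \cup K,\, J \cup K}$ vanishes identically as a polynomial in $(C,\Omega)$ on $\image(\phi_G)$.

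Next, I would tropicalize these polynomial identities. A polynomial $f(C,\Omega)$ vanishing on $\image(\phi_G)$ has a tropicalization $\trop(f)$ that vanishes in the tropical sense, i.e.\ attains its maximum at least twice, on $\trop(\image(\phi_G))$, and hence on the subset $\image(\trop(\phi_G))$. By the Tropical Trek Rule, $\trop(\phi_G)$ produces exactly the tropical covariance matrix $\Sigma^{\mathrm{trop}} = C^\ast \odot (C^\ast)^\trans$ (with the diagonal ``$\Omega$'' tropicalizing to the max-times identity). Applied to the $(\#K+1)\times(\#K+1)$ minors, this shows that every tropical $(\#K+1)\times(\#K+1)$ minor of $\Sigma^{\mathrm{trop}}_{I \cup K,\, J \cup K}$ is tropically singular, and hence $\trank(\Sigma^{\mathrm{trop}}_{I \cup K,\, J \cup K}) \leq \#K$.

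For the reverse inequality $\trank(\Sigma^{\mathrm{trop}}_{I \cup K,\, J \cup K}) \geq \#K$, I would exhibit a tropically non-singular $\#K \times \#K$ minor. A natural candidate is the principal submatrix indexed by $K \times K$: its diagonal entries satisfy $\Sigma^{\mathrm{trop}}_{k,k} \geq 1$, since the trivial trek at $k$ contributes $c^\ast_{kk}=1$. Writing $K$ in a topological order of $G$ makes the restricted Kleene-star $C^\ast_{K,K}$ lower triangular with ones on the diagonal, so it is tropically non-singular of tropical rank $\#K$. One then argues that this triangular structure propagates through $\Sigma^{\mathrm{trop}}_{K,K} = C^\ast_{K,[n]} \odot (C^\ast_{K,[n]})^\trans$ to force the identity permutation to strictly dominate in $\mathrm{tdet}(\Sigma^{\mathrm{trop}}_{K,K})$; if that fails for some adversarial choice of $C$, one instead isolates, inside $\Sigma^{\mathrm{trop}}_{I \cup K,\, J \cup K}$, a different $\#K \times \#K$ minor whose max-weight trek systems have a unique optimizer.

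I expect the main obstacle to be this lower bound. The upper bound is essentially mechanical from the tropical variety containment combined with the Gaussian rank result and the Tropical Trek Rule. The lower bound, however, is a genuinely tropical statement: there is no clean analogue of ``$\Sigma_{K,K}$ is positive definite'', and $\trank$ is not multiplicative under the max-times product in the way classical rank is, so some structural argument exploiting the Kleene-star form of $\Sigma^{\mathrm{trop}}$ and the ancestor structure of $K$ in $G$ is required.
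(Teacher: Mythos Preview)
Your upper-bound argument is exactly the paper's: invoke Proposition~\ref{prop:GaussCond} to get vanishing $(\#K{+}1)$-minors on $\image(\phi_G)$, then use the containment $\image(\trop(\phi_G)) \subseteq \trop(\image(\phi_G))$ to conclude that the corresponding tropical minors of $\Sigma^{\mathrm{trop}}_{I\cup K,\,J\cup K}$ are singular, hence $\trank \leq \#K$. The paper's proof in fact stops at this point and declares that ``the result follows''; it never argues the reverse inequality.

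You are right to flag the lower bound as the real obstacle, and your instinct that it does not come for free is well founded---indeed, it fails as stated. Take the DAG on $\{a,k_1,k_2,i,j\}$ with edges $a\to k_1$, $a\to k_2$, $k_1\to i$, $k_2\to j$, all of weight $c>1$. Then $K=\{k_1,k_2\}$ d-separates $I=\{i\}$ from $J=\{j\}$, but a direct computation gives
\[
\Sigma^{\mathrm{trop}}_{\{i,k_1,k_2\},\{j,k_1,k_2\}}
=
\begin{pmatrix}
c^4 & c^3 & c^3\\
c^3 & c^2 & c^2\\
c^3 & c^2 & c^2
\end{pmatrix},
\]
which has tropical rank $1<\#K=2$: every $2\times2$ minor attains its maximum twice. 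The mechanism is exactly what you worried about---a common ancestor of the $K$-nodes with sufficiently large weights forces the $K\times K$ block, and in fact the entire submatrix, to be tropically singular. So neither your triangular-Kleene-star argument nor any alternative can establish $\trank\geq\#K$ without an additional hypothesis (e.g.\ genericity of $C$); what both you and the paper actually prove is the inequality $\trank(\Sigma^{\mathrm{trop}}_{I\cup K,\,J\cup K}) \leq \#K$.
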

\begin{proof}
Proposition \ref{prop:GaussCond} implies that if $\Sigma \in \image(\phi_G)$
every $(\#K + 1) \times (\#K+1)$ minor of $\Sigma_{I,J}$ vanishes. Since we have that $\Sigma^\mathrm{trop} \in \trop(\image(\phi_G))$, the tropicalization of any polynomial that vanishes on $\image(\phi_G)$ will vanish on $\Sigma^\mathrm{trop}$. So all of the $(\#K + 1) \times (\#K+1)$ tropical minors of $\Sigma^\mathrm{trop}$ vanish and the result follows. 
\end{proof}

\begin{example}
\label{ex:trankDrop}
Consider the Diamond graph $G$ pictured in Figure \ref{fig:diamond} which has $\Sigma^\mathrm{trop}$ equal to
\[
 \begin{pmatrix}
1           &  c^\ast_{21}             & c^\ast_{31}               & c^\ast_{41}\\
c^\ast_{21} & (c^\ast_{21})^2\vee 1    & c^\ast_{21}c^\ast_{31}    & c^\ast_{21}c^\ast_{41}\vee c^\ast_{42}\\
c^\ast_{31} & c^\ast_{21}c^\ast_{31}             & (c^\ast_{31})^2\vee 1            &  c^\ast_{31}c^\ast_{41}\vee c^\ast_{43}\\
c^\ast_{41}     & c^\ast_{21}c^\ast_{41}\vee c^\ast_{42}  & c^\ast_{31}c^\ast_{41}\vee c^\ast_{43}   & (c^\ast_{41})^2 \vee (c^\ast_{42})^2\vee (c^\ast_{43})^2 \vee 1\\
\end{pmatrix}
\]

Note that the entry $\Sigma_{24}^\mathrm{trop}$ coincides with that computed in Equation \eqref{eq:exampletrekrule} since
\begin{align*}
    \Sigma^\mathrm{trop}_{24} &= c_{42} \vee c_{21}^2 c_{42} \vee c_{21}c_{31}c_{43} \\
     &= c_{42}^\ast \vee c_{21}^\ast( c_{21}c_{42} \vee c_{31}c_{43}) \\
     &= c_{42}^\ast \vee c_{21}^\ast c_{41}^\ast.
\end{align*}

Observe that $K = \{1\}$ d-separates $I = \{2\}$ and $J = \{3\}$ in $G$ so the tropical rank of the submatrix  $\Sigma^\mathrm{trop}_{I \cup K, J \cup K} = \Sigma^\mathrm{trop}_{\{1,2\}, \{1,3\}}$ should be $\#K = 1$. More explicitly, the submatrix is
\[
\Sigma^\mathrm{trop}_{\{1,2\}, \{1,3\}} = 
\begin{pmatrix}
1 & c^\ast_{31}\\
c^\ast_{21} &   c^\ast_{21}c^\ast_{31}\\
\end{pmatrix}
\]
and is not zero so it has tropical rank at least one. To show that it is rank is 1, we compute the tropical determinant which is
\[
\det(\Sigma^\mathrm{trop}_{\{1,2\}, \{1,3\}}) =  c^\ast_{21}c^\ast_{31} \vee c^\ast_{31} c^\ast_{21} 
\]
Since this determinant is tropically singular, we have that $\trank(\Sigma^\mathrm{trop}_{\{1,2\}, \{1,3\}}) = \#K = 1 .$ 
\end{example}

While conditional independence statements that come from d-separation give tropical rank constraints on $\Sigma^\mathrm{trop}$, the same is not necessarily true for those which come from $\ast$-separation. The following example illustrates this. 

\begin{example}
\label{ex:cassioTrank}
Let $G$ be the Cassiopeia graph pictured in Figure \ref{fig:cassiopeia} and recall that $1 \perp_\ast 3 | \{4, 5\}$ in $G$. The corresponding block of $\Sigma^\mathrm{trop}$ is
\[
\Sigma_{\{1, 4, 5\}, \{3, 4, 5\}}^\mathrm{trop} =
\begin{pmatrix}
0 & c_{41} & 0 \\
0 & c_{41}^2 \vee c_{42}^2 \vee 1 & c_{42}c_{52} \\
c_{53} & c_{42}c_{52} & c_{52}^2 \vee c_{53}^2  1
\end{pmatrix}.
\]
Observe that the tropical determinant of this submatrix is
\[
\det(\Sigma_{\{1, 4, 5\}, \{3, 4, 5\}}^\mathrm{trop}) = 0 \vee 0 \vee 0 \vee c_{41}c_{42}c_{52}c_{53} \vee 0 \vee 0
\]
which is not tropically singular for any choice of $C$. This means for every $C$ supported on $G$, the tropical rank of this submatrix is 3 so this conditional independence statement does not correspond to a drop in tropical rank. 
\end{example}

It is worth mentioning that, in general, computing the tropical rank is \emph{NP-Hard} \citep{shitov2014complexity}. However, there exist recent approximation algorithms \citep{karaev2019algorithms}. 

A natural question is if Theorem~ \ref{thm:tropRank} can be used for structure learning of a graph. A first obstacle is access to the tropical covariance matrix $\Sigma^{trop}$. Unlike the classical case, there is no known estimator for this matrix from data (see Question~\ref{q:estimator}). 

Nevertheless, we could assume oracle access to $\Sigma^{trop}$ and apply the PC algorithm to try recover a max-linear graph. However, the PC algorithm can fail because the distribution is in general not \emph{faithful}. This means that there exist valid CI statements that do not follow from $d$-separation. We illustrate with an example.

\begin{example}
\label{ex:PCfail}
Consider the Diamond graph from Figure~\ref{fig:diamond}, and assume that the matrix $C$ satisfies $c_{42}c_{21} < c_{31}c_{43}$ with $c_{31}>1$. Then we have that
\[
\Sigma^\mathrm{trop}_{\{1,3\}, \{3,4\}} = 
\begin{pmatrix}
c^\ast_{31} & c^\ast_{41}\\
(c^\ast_{31})^2\vee 1 &   c^\ast_{31}c^\ast_{41} \vee c^\ast_{43}\\
\end{pmatrix}
\]
has tropical rank 1 because
$$c^\ast_{31}(c^\ast_{31}c^\ast_{41} \vee c^\ast_{43}) =  c^\ast_{41} ((c^\ast_{31})^2\vee 1) = (c^\ast_{31})^2 c^\ast_{41}.$$ While it is true that $1 \ci 4 \cd 3$ in this scenario, this CI statement cannot be concluded from d-separation, since it is not true that $K=\{ 3\}$ d-separates $I=\{1\}$ and $J=\{4\}$ in the Diamond graph.
\end{example}

\section{Open Problems}\label{sec:openprob}
In this section we describe some open problems surrounding max-linear models with a particular emphasis on conditional independence. 

Our original inspiration for considering $\Sigma^\mathrm{trop}$ was its similarity to the tail-dependence matrix $\chi$ defined in \citep{sibuya1960bivariate}. \emph{Conditional tail dependence} is the extreme value analogue of correlation. \cite{gissibl2018max} showed that if $X$ is distributed according to a max-linear model on a DAG $G$ with Fr\'echet$(\alpha)$ innovations $Z_i$ then the tail dependence between $X_i$ and $X_j$ can be computed in the following way. First define the normalized coefficient matrix $\overline{C}$ with entries
\[
\overline{c}_{ij} = \frac{(c_{ij}^\ast)^\alpha}{\sum_{k \in \An(j)} (c_{kj}^\ast)^\alpha}
\]
then the tail dependence between $X_i$ and $X_j$ is 
\[
\chi(i,j) = \sum_{k \in \An(i) \cap \An(j)} \overline{c}_{ki} \wedge \overline{c}_{kj}.
\]
Since tail dependence is a popular measure of dependence in extreme value theory and the matrix $\chi$ can be estimated directly from data, it would be interesting to determine if something analogous to Theorem \ref{thm:tropRank} holds for $\chi$. If a relationship like this could be found, then more tools from algebraic geometry and tropical geometry could be used to study max-linear models just as algebraic geometry has been used to study Gaussian Bayesian networks. This motivates the following problem. 

\begin{problem}
Determine if conditional independence statements that hold for the max-linear model $X$ correspond to an algebraic or tropical algebraic constraint on the tail dependence matrix $\chi$. 
\end{problem}
It would be interesting to determine other information that $\Sigma^\mathrm{trop}$ encodes. We have shown that it satisfies tropical rank constraints similar to those for Gaussians but it would be more helpful if $\Sigma^\mathrm{trop}$ had a direct interpretation in terms of the underlying max-linear model or could be determined from data like $\chi$ (without having to estimate $C^\ast$). This leads us to the following question. 

\begin{question}\label{q:estimator}
Is there a consistent estimator for $\Sigma^\mathrm{trop}$?
\end{question}

Lastly, we note that our rank constraints on $\Sigma^\mathrm{trop}$ only correspond to conditional independence statements that come from d-separation. It would be interesting to determine if the conditional independence statements that come $\ast$-separation can also be interpreted as an algebraic constraint on $\Sigma^\mathrm{trop}$ or a related matrix such as $\chi$.

\begin{problem}
\label{prob:starSepAlg}
Suppose $K$ $\ast$-separates $I$ and $J$ in the DAG $G$ so $X_I \indep X_J | X_K$ for $X$ distributed according to a max-linear model on $G$.  Determine if this conditional independence statement corresponds to an algebraic or tropical algebraic constraint on the matrix $\Sigma^\mathrm{trop}$. 
\end{problem}

In fact, in Example~\ref{ex:PCfail} we see that despite the CI statement $1 \ci 4 \cd 3$ not being implied by d-separation, and only by $*$-separation, a tropical rank constraint still holds. However, we see in Example \ref{ex:cassioTrank} there is a $\ast$-separation statement that does not correspond to a drop in tropical rank. This suggests a complex relationship between tropical rank and $\ast$-separation.

Finally, it may also be interesting to consider an analogue of Problem \ref{prob:starSepAlg} when the coefficient matrix $C$ is fixed. This means additional $\ast$-separation statements might hold \citep{CImaxlinear}.

\subsubsection*{Acknowledgements}
  CA was partially supported by the Deutsche Forschungsgemeinschaft (DFG) in the context of the Emmy Noether junior research group KR 4512/1-1. 
   BH and SS were partially supported by the US NSF (DMS 1615660). NT is supported by the US NSF (DMS 2113468).
   We are grateful to anonymous reviewers for their constructive feedback on the paper.

\bibliography{ref}
 
\bigskip 

\bigskip

\small {\bf Authors' addresses:}

\bigskip 

\noindent
\noindent Technische Universit\"at M\"unchen,
\hfill {\tt carlos.amendola@tum.de} \\
North Carolina State University, \hfill {\tt bkholler@ncsu.edu} \\
North Carolina State University, \hfill {\tt smsulli2@ncsu.edu} \\
University of Austin, Texas, \hfill {\tt ntran@math.utexas.edu} \\

\end{document}